\theoremstyle{plain}
\newtheorem{theorem}{Theorem}[section]
\newtheorem{proposition}[theorem]{Proposition}
\newtheorem{lemma}[theorem]{Lemma}
\newtheorem{corollary}[theorem]{Corollary}
\theoremstyle{definition}
\newtheorem{definition}[theorem]{Definition}
\theoremstyle{remark}
\newtheorem{remark}[theorem]{Remark}
\renewcommand{\Re}{\operatorname{Re}}
\begin{document}

\pagestyle{plain}

\title{1-Grothendieck $C(K)$ spaces}
\author{Jind\v{r}ich Lechner}
\address{ Charles University in Prague\\
Faculty of Mathematics and Physics\\Department of Mathematical Analysis \\
Sokolovsk\'{a} 83, 186 \ 75\\Praha 8, Czech Republic}
\email{j.lechner@hush.com}

\subjclass[2010]{46E15; 54G05; 46A20}
\keywords{Grothendieck property; quantitative Grothendieck property; Haydon space; (I)-envelope}
\thanks{This work was supported by the Grant No. 244214/B-MAT/MFF of the GAUK and by the Research grant GA\v{C}R P201/12/0290.}
\begin{abstract}
A Banach space is said to be Grothendieck if weak and weak$^*$ convergent sequences in the dual space coincide. This notion has been quantificated by H. Bendov\'{a}. She has proved that $\ell_\infty$ has the quantitative Grothendieck property, namely, it is 1-Grothendieck. Our aim is to show that Banach spaces from a certain wider class are 1-Grothendieck, precisely, $C(K)$ is 1-Grothendieck provided $K$ is a totally disconnected compact space such that its algebra of clopen subsets has the so called Subsequential completeness property.      
\end{abstract}
\maketitle

\section{Introduction and main results}

We say that a Banach space $X$ is \textit{Grothendieck} if each weak$^*$ convergent sequence in the dual space $X^{*}$ is necessarily weakly convergent. Naturally, every reflexive space is Grothendieck. Classical example of a nonreflexive Grothendieck space is $\ell_\infty$ due to Grothendieck~\cite{Grothendieck}. More generally, $C(K)$ is Grothendieck if $K$ is a compact Hausdorff $F$-space (i.e., disjoint open $F_{\sigma}$ subsets of $K$ have disjoint closures)~\cite{Seever}. According to R. Haydon~\cite[1B Proposition]{Haydon}, $C(K)$ is Grothendieck provided $K$ is a totally disconnected compact space such that its algebra of clopen subsets has the so called Subsequential completeness property. In~\cite{Haydon} Haydon has constructed such a space which moreover does not contain isomorphic copy of $\ell_\infty$. In~\cite{Pfitzner} H. Pfitzner has shown that each von Neumann algebra is a Grothendieck space. Some other Grothendieck spaces are the Hardy space $H^{\infty}$~\cite{Bourgain} or weak $L^p$ spaces~\cite{Lotz}.

The Grothendieck property has been quantificated by H. Bendov\'{a} in~\cite{Bendova} as follows 

\begin{definition} [the Quantitative Grothendieck property] Let $X$ be a Banach space. For a bounded sequence $(x_{n}^\ast)_{n\in\mathbb{N}}$ in the dual $X^\ast$ define two moduli:
               \begin{align*}
                   \delta_{w^\ast}(x_{n}^\ast)&:=\sup\left\{\mathrm{diam}\,\mathrm{clust}(x_{n}^\ast(x)):\;x\in B_{X}\right\}\text{,}\\
                   \delta_{w}(x_{n}^\ast)     &:=\sup\left\{\mathrm{diam}\,\mathrm{clust}(x^{\ast\ast}(x_{n}^\ast)):\;x^{\ast\ast}\in  B_{X^{\ast\ast}}\right\}\text{,}
               \end{align*}
where $\mathrm{clust}(a_n)$ with $(a_n)$ being a sequence denotes the set of all cluster points of $(a_n)$.
               Let $c\geq 1$. We say that $X$ is \textit{c-Grothendieck} if $\delta_{w}(x_{n}^\ast)\leq c\delta_{w^\ast}(x_{n}^\ast)$ whenever $(x_{n}^\ast)_{n\in\mathbb{N}}$ is a bounded sequence in $X^\ast$.
        \end{definition}	
It is known that $\ell_{\infty}$ is even 1-Grothendieck due to H. Bendov\'{a}~\cite[Theorem 1.1]{Bendova}. We generalize this result on a wider class of spaces. This class also includes the space which Haydon has constructed~\cite{Haydon}.

Now, let us remind the definitions of the above mentioned notions which were essential for Haydon's construction. 

\newpage
\begin{definition}{~}
\begin{enumerate}
\item We say that a topological space $T$ is totally disconnected if it contains at least two different points and each two different points are separated by a clopen set. 
\item We say that a totally disconnected compact space $K$ is a \textit{Haydon space} if the algebra
of its clopen subsets has \textit{the Subsequential completeness property} (SCP),
i.e., if for any sequence $(U_n)_{n\in\mathbb{N}}$ of pairwise disjoint clopen sets
there is an infinite set $M\subset\mathbb{N}$ such that the union of $(U_m)_{m\in\mathbb{M}}$ has the open closure.
\end{enumerate}
\end{definition}

Our aim is to show that  $C(K)$, that is $C(K;\mathbb{R})$ or $C(K;\mathbb{C})$, has the Quantitative Grothendieck property, namely it is 1-Grothendieck, provided $K$ is a Haydon space. Since the Quantitative Grothendieck property implies the Qualitative one, our result strengthens Haydon's proposition~\cite[1B Proposition]{Haydon}.

\begin{theorem} \label{Our_aim} If $S$ is a Haydon space then $C(S)$ is 1-Grothendieck. 
\end{theorem}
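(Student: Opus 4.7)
The plan is to adapt H.~Bendov\'{a}'s quantitative argument for $\ell_\infty=C(\beta\mathbb{N})$ to a general Haydon space, with subsets of $\mathbb{N}$ replaced by clopen subsets of $S$ and with SCP playing the role of the combinatorial extraction at the end.

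First I would set up the reduction. Let $(\mu_n)$ be a bounded sequence in $M(S)=C(S)^\ast$, put $c:=\delta_{w^\ast}(\mu_n)$, and fix $F\in B_{C(S)^{\ast\ast}}$ together with two cluster points $a,b$ of $(F(\mu_n))$; it suffices to show $|a-b|\le c$. Passing to subsequences I may assume $F(\mu_{2k-1})\to a$ and $F(\mu_{2k})\to b$, so that $\nu_k:=\mu_{2k}-\mu_{2k-1}$ is bounded in $M(S)$ with $F(\nu_k)\to b-a$. A short elementary extraction using the definition of $\delta_{w^\ast}$ then shows that every cluster point of $(\nu_k(g))_k$ has modulus at most $c$ for each $g\in B_{C(S)}$, and the task collapses to producing one single $g\in B_{C(S)}$ along a subsequence of which $\nu_k(g)$ approximates $F(\nu_k)$ arbitrarily well.

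To build this witness I would run a Rosenthal/exhaustion style induction: total disconnectedness of $S$ together with regularity of Radon measures lets me inductively choose a further subsequence and pairwise disjoint clopen sets $U_k\subset S$ for which $|\nu_k|(S\setminus U_k)$ and $|\nu_j|(U_k)$ (for $j\ne k$) are arbitrarily small, while at the same time $F(\nu_k)$ is captured to within $\varepsilon_k\downarrow 0$ by the pairing of $\nu_k$ with $\chi_{U_k}$. The SCP then supplies an infinite $M\subset\mathbb{N}$ for which $V:=\overline{\bigcup_{k\in M}U_k}$ is clopen, so that $g:=\chi_V\in B_{C(S)}$; by the near-disjointness of the supports, $\nu_k(g)\approx\nu_k(\chi_{U_k})\approx F(\nu_k)\to b-a$ along $M$, producing a cluster point of $(\nu_k(g))_k$ arbitrarily close to $b-a$ and forcing $|b-a|\le c$.

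The main obstacle is exactly the coupling hidden in the inductive step, namely translating the action of an abstract bidual element $F\in C(S)^{\ast\ast}$ into an integral against a characteristic function of a clopen set. A general element of $C(S)^{\ast\ast}$ need not correspond to a uniformly describable Borel function, so one cannot naively ``evaluate $F$ on $\chi_{U_k}$''. The likely remedies are either a Pt\'{a}k-type diagonal argument exploiting only that $F$ is bounded on $M(S)$, or a direct appeal to Kalenda's (I)-envelope machinery hinted at by the keywords, which is precisely designed to pull bidual approximations back to $C(S)$. In either route, SCP is what upgrades the local clopen witnesses $U_k$ into a single global test function, which is exactly why the theorem is confined to Haydon spaces.
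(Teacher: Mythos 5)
There is a genuine gap, and you have in fact located it yourself: the inductive step in which you choose clopen sets $U_k$ so that ``$F(\nu_k)$ is captured to within $\varepsilon_k$ by the pairing of $\nu_k$ with $\chi_{U_k}$'' is not something you can arrange. A general $F\in B_{C(S)^{\ast\ast}}$ gives no handle on the values $\nu_k(U)$ for clopen $U$; Goldstine's theorem only lets you approximate $F$ on finitely many measures at a time, with no uniformity along the sequence $(\nu_k)$, and obtaining that uniformity is precisely the content of the Grothendieck property. So your reduction ``the task collapses to producing one single $g$ along a subsequence of which $\nu_k(g)$ approximates $F(\nu_k)$'' restates the theorem rather than reducing it, and the proof stops exactly where the difficulty begins.

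The paper's proof eliminates the bidual element altogether. It combines Bendov\'a's characterization ($X$ is $1$-Grothendieck iff $\mathrm{(I)}\mathrm{-env}(B_X)=B_{X^{\ast\ast}}$) with Kalenda's measure-theoretic description of when $\mathrm{(I)}\mathrm{-env}\left(B_{C(K)}\right)$ fills the bidual ball, turning the theorem into condition (iii) of Proposition~\ref{equivalence}: given two sequences of mutually singular Radon probability measures and $\varepsilon>0$, one must produce a single infinite $\Lambda$ and a single pair of disjoint compact (in fact clopen) sets almost carrying $\mu_n$, respectively $\nu_n$, for \emph{all} $n\in\Lambda$ simultaneously. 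Establishing that separation statement is the real work, and it is not one application of SCP to disjointified supports: the paper needs a two-stage extraction (Lemma~\ref{Inductivelylemma} followed by Lemma~\ref{Keylemma}), an asymmetric treatment of the two sequences (one a sequence of additive functions on clopen sets, the other monotone additive on open sets), the auxiliary superadditive functions $\sigma_n$ defined as suprema over clopen closures of unions of blocks $C_{p_j}\setminus C_{p_{j-1}}$, and SCP is invoked there to turn an infinite union of such blocks into a set with open closure. Your closing remark that ``a direct appeal to Kalenda's (I)-envelope machinery'' might be the remedy is correct, but that appeal replaces your whole scheme rather than patching one step of it.
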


The proof of the theorem is in the section~\ref{real}. Since 1-Grothendieck property of $C(K;\mathbb{R})$ and $C(K;\mathbb{C})$ being equivalent we get our result for real and complex spaces at once. The equivalence is proved in the section~\ref{sec_equivalence}. 
				
\begin{corollary}
$C(K)$ is $1$-Grothendieck whenever $K$ is a $\sigma$-Stonean compact Hausdorff space (i.e., a compact Hausdorff space in which the closure of any open $F_\sigma$ set is open). In particular, $C(K)$ is $1$-Grothendieck whenever $K$ is an extremally disconnected (i.e., every open set has open closure) compact Hausdorff space.
\end{corollary}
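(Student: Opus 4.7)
The plan is to reduce both assertions to Theorem~\ref{Our_aim} by showing that every $\sigma$-Stonean compact Hausdorff space $K$ is automatically a Haydon space; this amounts to verifying that $K$ is totally disconnected and that the algebra of its clopen subsets satisfies the Subsequential completeness property.

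The SCP is almost immediate. Given pairwise disjoint clopen sets $(U_n)_{n\in\mathbb{N}}$, the union $U:=\bigcup_n U_n$ is open and, as a countable union of closed sets, open $F_\sigma$. By the $\sigma$-Stonean hypothesis $\overline{U}$ is open, so the SCP holds with $M=\mathbb{N}$ (no passage to a subsequence is needed).

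For total disconnectedness I would invoke Urysohn's lemma: given distinct $x,y\in K$, choose a continuous $f\colon K\to[0,1]$ with $f(x)=0$ and $f(y)=1$. The set $V:=f^{-1}[0,\tfrac12)=\bigcup_n f^{-1}[0,\tfrac12-\tfrac1n]$ is open $F_\sigma$, hence $\overline{V}$ is clopen. Obviously $x\in\overline{V}$, and since the open set $W:=f^{-1}(\tfrac12,1]$ contains $y$ and is disjoint from $V$, the general fact $\overline{V}\cap W=\emptyset$ (for $W$ open and $V\cap W=\emptyset$) gives $y\notin\overline{V}$. So clopen sets separate points of $K$, i.e., $K$ is totally disconnected.

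Combining these two observations, $K$ is a Haydon space, and Theorem~\ref{Our_aim} then yields that $C(K)$ is $1$-Grothendieck. For the ``in particular'' clause, in an extremally disconnected compact Hausdorff space the closure of every open set is open, hence a fortiori of every open $F_\sigma$ set, so such a space is $\sigma$-Stonean and the previous reduction applies. The only mildly nontrivial ingredient is the Urysohn-based extraction of total disconnectedness from the $\sigma$-Stonean condition; beyond that the deduction is formal.
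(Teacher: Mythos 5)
Your proof is correct, and it follows the same overall reduction as the paper: show that a $\sigma$-Stonean compact Hausdorff space is a Haydon space and then invoke Theorem~\ref{Our_aim}. The difference is only in how the implication ``$\sigma$-Stonean $\Rightarrow$ Haydon'' is established: the paper disposes of it in one line by citing Theorem~A of Seever's paper \emph{Measures on $F$-spaces}, whereas you prove it from scratch --- the SCP holds trivially with $M=\mathbb{N}$ because a countable disjoint union of clopen sets is an open $F_\sigma$ set, and total disconnectedness follows from Urysohn's lemma via the clopen set $\overline{f^{-1}[0,\tfrac12)}$, using that an open set disjoint from $V$ is disjoint from $\overline{V}$. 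Both steps are carried out correctly, and your self-contained argument is arguably more informative than the citation, at the (negligible) cost of a few lines; it also makes transparent that the full strength of Seever's theorem is not needed here. The only pedantic caveat, which applies equally to the paper's own formulation, is that the paper's definition of ``totally disconnected'' requires at least two points, so the degenerate case of a one-point $K$ is formally excluded from being Haydon; there $C(K)$ is one-dimensional, hence reflexive and trivially $1$-Grothendieck, so nothing is lost.
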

\begin{proof}
In view of~\cite[Theorem A]{Seever} every $\sigma$-Stonean compact Hausdorff space is Haydon.
\end{proof}

\begin{corollary}
There is a nonreflexive $1$-Grothendieck space not containing $\ell_\infty$.
\end{corollary}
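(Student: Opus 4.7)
The plan is to stitch together Theorem~\ref{Our_aim} with Haydon's original construction. Recall that in \cite{Haydon} Haydon exhibits a totally disconnected compact space $K$ whose clopen algebra enjoys the Subsequential completeness property (so $K$ is a Haydon space in our terminology) and such that $C(K)$ contains no isomorphic copy of $\ell_\infty$. This space is precisely the example we need to carry the argument.

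First I would invoke Theorem~\ref{Our_aim} applied to $S=K$: being a Haydon space, $K$ makes $C(K)$ automatically $1$-Grothendieck. Second, I would check nonreflexivity. Haydon's $K$ is infinite (his construction yields a space with infinitely many pairwise disjoint clopen sets), so $C(K)$ contains an isomorphic copy of $c_0$ and in particular fails to be reflexive. Finally, by the choice of $K$, the space $C(K)$ does not contain $\ell_\infty$. Assembling these three observations produces the claimed example.

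I do not anticipate any real obstacle: the whole content is already packaged into Haydon's construction and Theorem~\ref{Our_aim}, and the verification of nonreflexivity is routine. The only thing worth double-checking is that Haydon's space is infinite (which is immediate, since his SCP hypothesis is applied to genuinely infinite sequences of pairwise disjoint clopens) so that the standard embedding $c_0\hookrightarrow C(K)$ is available. No further computation is needed.
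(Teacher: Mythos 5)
Your proposal is correct and follows exactly the paper's route: cite Haydon's construction of a Haydon space $K$ with $C(K)$ not containing $\ell_\infty$ and apply Theorem~\ref{Our_aim}. Your explicit verification of nonreflexivity via the $c_0$-embedding is a small (and welcome) addition that the paper leaves implicit, but the argument is otherwise the same.
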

\begin{proof}
As we have already said Haydon had constructed a Haydon space $K$ with $C(K)$ not containing $\ell_\infty$~\cite{Haydon}. 
\end{proof}

\section{Real and complex case equivalence}\label{sec_equivalence}

This section is devoted to the following proposition.

\begin{proposition} \label{equivalence} Let $K$ be a compact Hausdorff space. Then the following assertions are equivalent:
\newcounter{ListCitac}
\begin{list}{(\roman{ListCitac})}{\usecounter{ListCitac}}
\item $C(K;\mathbb{R})$ is 1-Grothendieck.
\item $C(K;\mathbb{C})$ is 1-Grothendieck.
\item Whenever $\mu_n$ and $\nu_n$, $n\in\mathbb{N}$, are two sequences of Radon probability measures on $K$ such that $\mu_m$ and $\nu_n$ are mutually singular for each $m,\,n\in\mathbb{N}$ and $\varepsilon>0$ then there are $\Lambda\subset\mathbb{N}$ infinite and disjoint compact sets $A,\,B\subset T$ such that for each $n\in\Lambda$ we have $\mu_n(A)>1-\varepsilon$ and $\nu_n(B)>1-\varepsilon$.
\end{list}
\end{proposition}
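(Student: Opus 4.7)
\emph{Outline.} The proposition combines the real–complex equivalence (i) $\Leftrightarrow$ (ii), which is largely technical, with the measure-theoretic characterization (i) $\Leftrightarrow$ (iii), which carries the substantive content. For (i) $\Leftrightarrow$ (ii) I would identify $C(K;\mathbb{C})^\ast = M(K;\mathbb{R}) \oplus iM(K;\mathbb{R})$ via the Riesz representation, decompose $B_{C(K;\mathbb{C})}$ via real and imaginary parts of norm at most one, and use that $C(K;\mathbb{C})^{\ast\ast}$ is the complexification of $C(K;\mathbb{R})^{\ast\ast}$; computing $\delta_{w^\ast}$ and $\delta_w$ in each setting via these decompositions then transfers the $1$-Grothendieck constant without loss.

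\emph{(iii) $\Rightarrow$ (i).} I would take a bounded $(\mu_n) \subset M(K;\mathbb{R})$ with $\delta_w(\mu_n) > \alpha$, pick a witnessing $x^{\ast\ast} \in B_{C(K;\mathbb{R})^{\ast\ast}}$ and two subsequences $(\mu_{n_k}),(\mu_{m_k})$ whose $x^{\ast\ast}$-values cluster at distinct points separated by more than $\alpha$. Passing to weak-$\ast$ convergent subsequences and subtracting the limits reduces to a weak-$\ast$ null situation. Jordan decomposition combined with a sliding-hump extraction then produces two sequences of mutually singular probability measures to which (iii) applies; the disjoint compact $A,B$ it yields, together with a Urysohn function equal to $\pm 1$ on $A,B$, witness $\delta_{w^\ast}(\mu_n) \geq \alpha - o(1)$, which establishes $1$-Grothendieck.

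\emph{(i) $\Rightarrow$ (iii).} From $\mu_m \perp \nu_n$ for all $m,n$ I would construct a Borel set $E = \bigcup_m \bigcap_n E_{m,n}$ (with $E_{m,n}$ a witness of $\mu_m \perp \nu_n$) satisfying $\mu_m(E)=1$ and $\nu_n(E)=0$ for all $m,n$. Consider the sequence $(\xi_n) \subset M(K;\mathbb{R})$ with $\xi_{2n-1} = \mu_n - \nu_n$ and $\xi_{2n} = \nu_n - \mu_n$ (each of norm $2$); then $x^{\ast\ast} := 2\chi_E - 1 \in B_{C(K;\mathbb{R})^{\ast\ast}}$ satisfies $x^{\ast\ast}(\xi_n) = \pm 2$ according to parity, so $\delta_w(\xi_n) \geq 4$. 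Invoking (i) yields $\delta_{w^\ast}(\xi_n) \geq 4$, so for each $\eta>0$ some $f \in B_{C(K;\mathbb{R})}$ produces a subsequence $(k_j)$ along which $(\mu_{k_j}-\nu_{k_j})(f)$ is close to $2$, hence $\mu_{k_j}(f)$ is close to $+1$ and $\nu_{k_j}(f)$ is close to $-1$. A Markov-type estimate on the level sets $A = \{f \geq 1-\sqrt{\eta}\}$ and $B = \{f \leq -(1-\sqrt{\eta})\}$ (compact and disjoint) then gives (iii) with $\Lambda = \{k_j\}$ and $\varepsilon = \sqrt{\eta}$.

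\emph{Main obstacle.} The principal technical difficulty lies in (iii) $\Rightarrow$ (i): producing \emph{cross-index} mutual singularity of the reduced probability sequences from an arbitrary bounded sequence of signed measures. Jordan decomposition supplies singularity only within each fixed $\mu_n$, so a careful sliding-hump or Rosenthal-style selection is required to achieve mutual singularity across all index pairs simultaneously before (iii) can be invoked.
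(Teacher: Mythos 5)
Your direction (i)$\Rightarrow$(iii) is essentially sound: the Borel set $E$ with $\mu_m(E)=1$ and $\nu_n(E)=0$ for all $m,n$, the element $2\chi_E-1\in B_{C(K;\mathbb{R})^{\ast\ast}}$, the interleaved sequence $(\xi_n)$ with $\delta_w(\xi_n)=4$, and the Markov estimate on level sets of a near-optimal $f$ do yield (iii). The other two implications, however, have genuine gaps. In (iii)$\Rightarrow$(i), the step ``passing to weak-$\ast$ convergent subsequences'' is not available: $B_{M(K)}$ is weak$^{\ast}$ compact but in general not weak$^{\ast}$ sequentially compact (already for $K=\beta\mathbb{N}$, i.e.\ $C(K)=\ell_\infty$, which is the motivating example), so the reduction to a weak$^{\ast}$ null situation fails at the outset. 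More seriously, the extraction of \emph{cross-index} mutually singular probability measures --- which you yourself flag as the main unresolved obstacle --- is exactly the hard content of this implication: Jordan decomposition gives $\tau_k^{+}\perp\tau_k^{-}$ only for equal indices, Rosenthal-type sliding-hump selections give only approximate disjointness of supports, and (iii) demands exact mutual singularity for every pair $(m,n)$; no mechanism is offered for passing from the approximate to the exact version while retaining the quantitative bound $\alpha-o(1)$. In (i)$\Leftrightarrow$(ii), splitting functionals into real and imaginary parts transfers the qualitative Grothendieck property but a priori costs a multiplicative constant (real and imaginary parts of norm at most one reassemble only to something of norm at most $\sqrt{2}$), and the whole point of the proposition is that the constant $1$ survives; your outline does not say where this factor is recovered.

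For comparison, the paper proves none of these implications from scratch: (i)$\Leftrightarrow$(iii) is quoted as the combination of Bendov\'a's characterization of $1$-Grothendieck by $\mathrm{(I)}\mathrm{-env}(B_X)=B_{X^{\ast\ast}}$ \cite[Proposition 2.2]{Bendova} with Kalenda's measure-theoretic characterization of when $\mathrm{(I)}\mathrm{-env}\bigl(B_{C(K;\mathbb{R})}\bigr)$ is the whole bidual ball \cite[Proposition 4.2]{Kalenda}, and (i)$\Leftrightarrow$(ii) is obtained by routing both scalar fields through that same (I)-envelope condition via \cite[Proposition 5.1]{Kalenda}, Lemma~\ref{pomoc} and Proposition~\ref{complex_Bendova} (where the potential loss of a constant in the real/complex passage is avoided by the rotation trick $\alpha=|a-b|/(a-b)$). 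A self-contained argument along your lines would essentially have to reprove those two results of Kalenda; otherwise the clean fix is to cite them as the paper does.
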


To prove the equivalence of (i) and (ii) we need recall the notion of (I)-envelope established by O.F.K.~Kalenda~\cite{Kalenda} and the relationships between complex Banach spaces and real ones.	

\begin{definition}[(I)-envelope]
             Let $X$ be a Banach space and $B\subset X^\ast$. The (I)-envelope of $B$ is defined by the formula 
$$\mathrm{(I)}\mathrm{-env}(B):=\bigcap\left\{\overline{\mathrm{co}}^{\parallel\cdotp\parallel}\bigcup_{n=1}^{\infty}\overline{\mathrm{co}}^{w^{\ast}}C_{n}\,:\;B=\bigcup_{n=1}^{\infty}C_{n}\right\}\text{.}$$
\end{definition}

\begin{remark}

								\begin{description}
                     \item[$\diamond$] $\mathrm{(I)}\mathrm{-env}(B)$ is norm-closed and convex,
                     \item[$\diamond$] $\overline{\mathrm{co}}^{||\cdotp||}B\subset\mathrm{(I)}\mathrm{-env}(B)\subset\overline{\mathrm{co}}^{w^{\ast}}B$
                     \item[$\diamond$] $X$ is considered to be canonically embedded in $X^{\ast\ast}$, so the operation (I)-envelope applied to subsets of $X$ is done in the bidual $X^{\ast\ast}$.
                \end{description}	
\end{remark}

If $X$ is a complex Banach space then $X_{R}$ denotes $X$ considered as a real space. The following properties are well known and easy to check.
\begin{itemize}
\item The identity map $X$ onto $X_{R}$ is a real-linear isometry. Thus $B_{X}=B_{X_{R}}$.
\item The map $\phi:\,X^\ast\rightarrow \left(X_R\right)^{\ast}$ defined by $$\phi(x^\ast)(x)=\Re\,\left( x^\ast(x)\right),\quad x\in X,\,x^\ast\in X^\ast,$$ is a real-linear isometry.
\item The map $\psi:\,X^{\ast\ast}\rightarrow \left(X_R\right)^{\ast\ast}$ defined by $$ \psi(x^{\ast\ast})(x^\ast_R)=\Re\,\left( x^{\ast\ast}\left(\phi^{-1}(x^{\ast}_R)\right)\right),\quad x^{\ast}_R\in \left(X_R\right)^{\ast},\,x^{\ast\ast}\in X^{\ast\ast},$$ is a real-linear isometry and weak$^{\ast}$-to-weak$^{\ast}$ homeomorphism. This causes $\psi\left[\mathrm{(I)}\mathrm{-env}(B)\right]=\mathrm{(I)}\mathrm{-env}(\psi[B])$ whenever $B\subset X^{\ast\ast}$.
\item If $B\subset X$, then $\psi[\varepsilon_X[B]]=\varepsilon_{X_R}[B_R]$, where $B_R$ denotes $B$ considered as a subset of $X_R$ and $\varepsilon_X:X\rightarrow X^{\ast\ast}$ and $\varepsilon_{X_R}:X_{R}\rightarrow (X_{R})^{\ast\ast}$ are canonical embeddings into the respective biduals.
\end{itemize}

\begin{lemma} \label{pomoc} Let $X$ be a complex Banach space and $\phi:\,X^{\ast}\rightarrow \left(X_R\right)^{\ast}$ be as above. Then $\delta_{w^\ast}(x_{n}^\ast)=\delta_{w^\ast}\left(\phi (x_{n}^\ast)\right)$ and $\delta_w(x_{n}^\ast)=\delta_w\left(\phi (x_{n}^\ast)\right)$ for each bounded sequence $(x_{n}^\ast)_{n\in\mathbb{N}}$ in $X^\ast$. In particular, $X$ is c-Grothendieck if and only if $X_R$ is c-Grothendieck.
\end{lemma}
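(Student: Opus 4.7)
The plan is to reduce both identities $\delta_{w^*}(x_n^*) = \delta_{w^*}(\phi(x_n^*))$ and $\delta_w(x_n^*) = \delta_w(\phi(x_n^*))$ to a single ``rotation identity'' about complex-valued sequences, and then deduce the $c$-Grothendieck equivalence from the two moduli equalities together with the surjectivity of $\phi$. The rotation identity I have in mind is: for every bounded $(\alpha_n) \subset \mathbb{C}$,
$$\mathrm{diam}\,\mathrm{clust}(\alpha_n) \;=\; \sup_{\theta \in \mathbb{R}} \mathrm{diam}\,\mathrm{clust}\bigl(\Re(e^{i\theta}\alpha_n)\bigr).$$
I would prove it in two short steps. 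First, $\mathrm{clust}(\Re \beta_n) = \Re(\mathrm{clust}(\beta_n))$ whenever $(\beta_n) \subset \mathbb{C}$ is bounded: the inclusion $\supset$ is immediate, while for $\subset$ one extracts, from a subsequence witnessing a real cluster point, a further subsequence converging in $\mathbb{C}$ by boundedness. Second, for any bounded $C \subset \mathbb{C}$, $\mathrm{diam}(C) = \sup_\theta \mathrm{diam}(\Re(e^{i\theta}C))$; the $\geq$ bound is immediate from $|\Re w| \leq |w|$, while the $\leq$ bound follows by picking $z, w \in \overline{C}$ nearly realising the diameter and choosing $\theta$ with $e^{i\theta}(z-w) \geq 0$.

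To prove the first moduli equality, I apply the rotation identity to $\alpha_n = x_n^*(x)$ with $x \in B_X$. Since $e^{i\theta}\alpha_n = x_n^*(e^{i\theta}x)$, $\Re(x_n^*(y)) = \phi(x_n^*)(y)$, and $y \mapsto e^{i\theta}y$ permutes $B_X = B_{X_R}$, taking the joint supremum over $x \in B_X$ and $\theta \in \mathbb{R}$ (and recognising that $e^{i\theta}x$ then simply runs through $B_X$) delivers $\delta_{w^*}(x_n^*) = \delta_{w^*}(\phi(x_n^*))$.

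The second equality is analogous with $\alpha_n = x^{**}(x_n^*)$ for $x^{**} \in B_{X^{**}}$. Rotation now acts by complex scalar multiplication in the bidual: $(e^{i\theta}x^{**})(x_n^*) = e^{i\theta}\,x^{**}(x_n^*)$. The defining identity $\psi(x^{**})(\phi(x_n^*)) = \Re(x^{**}(x_n^*))$ together with $\psi$ being a bijective isometry (so $\psi(B_{X^{**}}) = B_{(X_R)^{**}}$) reparametrises the supremum defining $\delta_w(\phi(x_n^*))$ as one over $x^{**} \in B_{X^{**}}$, and the rotation identity closes the argument.

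For the final assertion, since $\phi$ is a bijective real-linear isometry between $X^*$ and $(X_R)^*$, bounded sequences in the two duals correspond, and the two moduli equalities transfer the defining inequality of the $c$-Grothendieck property verbatim between $X$ and $X_R$. I foresee no genuine obstacle: the whole argument is the rotation trick packaged correctly, and the only point worth double-checking is that in the second application the complex scalar multiplication by $e^{i\theta}$ in $X^{**}$ commutes with evaluation on $X^*$ in the expected way — which is part of the complex vector-space structure of $X^{**}$.
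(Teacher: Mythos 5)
Your proposal is correct and is essentially the paper's argument: the paper likewise proves one inequality by lifting real cluster points of $\Re\left(x^{\ast\ast}(x_n^\ast)\right)$ to complex cluster points of $\left(x^{\ast\ast}(x_n^\ast)\right)$, and the other by rotating $x^{\ast\ast}$ with the unimodular scalar $\alpha=|a-b|/(a-b)$, which is exactly your rotation identity applied pointwise. You merely package the rotation trick as a standalone statement about bounded complex sequences and then observe that multiplication by $e^{i\theta}$ permutes the relevant unit balls, which is a clean but not substantively different organization.
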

\begin{proof}
Let $(x_{n}^\ast)_{n\in\mathbb{N}}$ be a bounded sequence in $X^\ast$. We show only $\delta_w(x_{n}^\ast)=\delta_w\left(\phi (x_{n}^\ast)\right)$, the second equality $\delta_{w^\ast}(x_{n}^\ast)=\delta_{w^\ast}\left(\phi (x_{n}^\ast)\right)$ is done in the same way. Realize that $$\delta_w\left(\phi (x_{n}^\ast)\right)=\sup\left\{\mathrm{diam}\,\mathrm{clust}\left(\Re\,\left(x^{\ast\ast}\left(x_{n}^\ast\right)\right)\right):\,x^{\ast\ast}\in X^{\ast\ast}\right\}\text{.}$$ Whenever $a,b\in\mathbb{R}$ are cluster points of the sequence $\left(\Re\,\left(x^{\ast\ast}\left(x_{n}^{\ast}\right)\right)\right)_{n\in\mathbb{N}}$ for some $x^{\ast\ast}\in B_{X^{\ast\ast}}$, there are $c,d\in\mathbb{R}$ such that $a+ic$ and $b+id$ are cluster points of $(x^{\ast\ast}(x_{n}^\ast))_{n\in\mathbb{N}}$, thus $$\mathrm{diam}\,\mathrm{clust}\left(\Re\,\left(x^{\ast\ast}\left(x_{n}^\ast\right)\right)\right)\leq\mathrm{diam}\,\mathrm{clust}\left(x^{\ast\ast}\left(x_{n}^\ast\right)\right)\text{.}$$ And hence $\delta_w\left(\phi (x_{n}^\ast)\right)\leq\delta_w(x_{n}^\ast)$. To show the converse inequality, let $c$ such that $c<\delta_w(x_{n}^\ast)$. Then there exists $x^{\ast\ast}\in B_{X^{\ast\ast}}$ such that $\mathrm{diam}\,\mathrm{clust}\left(x^{\ast\ast}\left(x_{n}^\ast\right)\right)>c$, thus there are $a,b\in\mathrm{clust}\left(x^{\ast\ast}\left(x_{n}^\ast\right)\right)$ such that $|a-b|>c$. There are two increasing sequences of natural numbers $(p_k)_{k\in\mathbb{N}}$ and $(r_k)_{k\in\mathbb{N}}$ such that $$x^{\ast\ast}(x_{p_k}^\ast)\rightarrow a\,\text{ and }\,x^{\ast\ast}(x_{r_k}^\ast)\rightarrow b\text{ .} $$ Set $\alpha:=|a-b|/(a-b)$, then $\alpha$ is a complex unit, $\alpha(a-b)=|a-b|$ and $\alpha x^{\ast\ast}\in B_{X^{\ast\ast}}$. Then $$\Re\,\left(\left(\alpha x^{\ast\ast}\right)\left(x_{p_k}^{\ast}\right)\right)-\Re\,\left(\left(\alpha x^{\ast\ast}\right)\left(x_{r_k}^{\ast}\right)\right)=\Re\,\left(\alpha\left(x^{\ast\ast}\left(x_{p_k}^{\ast}\right)-x^{\ast\ast}\left(x_{r_k}^{\ast}\right)\right)\right)\rightarrow |a-b|\text{,}$$ since $\alpha\left(x^{\ast\ast}\left(x_{p_k}^{\ast}\right)-x^{\ast\ast}\left(x_{r_k}^{\ast}\right)\right)\rightarrow\alpha\left(a-b\right)=|a-b|$. Let $\breve{a}, \breve{b}\in\mathbb{R}$ be cluster points of $\left(\Re\,\left(\left(\alpha x^{\ast\ast}\right)\left(x_{p_k}^{\ast}\right)\right)\right)_{k\in\mathbb{N}}$, $\left(\Re\,\left(\left(\alpha x^{\ast\ast}\right)\left(x_{r_k}^{\ast}\right)\right)\right)_{k\in\mathbb{N}}$, respectively. Then $\breve{a}-\breve{b}=|a-b|>c$, thus $\mathrm{diam}\,\mathrm{clust}\left(\Re\,\left(\left(\alpha x^{\ast\ast}\right)\left(x_{n}^\ast\right)\right)\right)>c$ and $\delta_w\left(\phi (x_{n}^\ast)\right)>c$.
\end{proof}

\begin{proposition} \label{complex_Bendova} Let $X$ be a real or complex Banach space and $c\geq 1$. Then $X$ is c-Grothendieck if and only if $\mathrm{(I)}\mathrm{-env}(B_{X})\supset\frac{1}{c}B_{X^{\ast\ast}}$.
\end{proposition}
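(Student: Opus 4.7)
The plan is to prove the two implications separately using the interplay between the $\mathrm{(I)}$-envelope and cluster points of bounded sequences in $X^*$. By Lemma~\ref{pomoc}, the complex case reduces to the real case; assume $X$ is real and write $r:=1/c$.

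For the implication ``$\mathrm{(I)}\mathrm{-env}(B_X) \supset r B_{X^{**}}$ implies $c$-Grothendieck'': fix a bounded sequence $(x_n^*)\subset X^*$, $x^{**}\in B_{X^{**}}$ and two cluster points $a,b$ of $(x^{**}(x_n^*))_n$; I aim to show $|a-b|\leq c\,\delta_{w^*}(x_n^*)=:cd$. Pass to subsequences so that $x^{**}(x_{p_k}^*)\to a$ and $x^{**}(x_{q_k}^*)\to b$, and fix $\varepsilon>0$. For $N\in\mathbb{N}$ and $\lambda$ in a countable grid of $\mathbb{R}$, let
\[
C_{N,\lambda}:=\{x\in B_X:\, x_{p_k}^*(x),\,x_{q_k}^*(x)\in[\lambda-d/2-\varepsilon,\,\lambda+d/2+\varepsilon]\text{ for all }k\geq N\}.
\]
Since the cluster set of $(x_n^*(x))_n$ has diameter at most $d$, both subsequences eventually stay close to a common length-$d$ interval, so these sets cover $B_X$. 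The hypothesis then places $rx^{**}$ in $\overline{\mathrm{co}}^{\|\cdot\|}\bigcup \overline{\mathrm{co}}^{w^*}C_{N,\lambda}$. The defining interval condition is weak*-closed and stable under convex combinations: for $y=\sum_i t_i y_i$ with $y_i\in\overline{\mathrm{co}}^{w^*}C_{N_i,\lambda_i}$ and $k\geq\max_i N_i$, both $x_{p_k}^*(y)$ and $x_{q_k}^*(y)$ lie in the common interval of length $d+2\varepsilon$ centered at $\sum_i t_i\lambda_i$, whence $|x_{p_k}^*(y)-x_{q_k}^*(y)|\leq d+2\varepsilon$. Norm-approximating $rx^{**}$ by such $y$'s and passing to the limit $k\to\infty$, then $\varepsilon\to 0$, gives $|r(a-b)|\leq d$, i.e.\ $|a-b|\leq cd$.

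For the converse, argue by contraposition. Assume $x_0^{**}\in B_{X^{**}}$ with $rx_0^{**}\notin\mathrm{(I)}\mathrm{-env}(B_X)$. Fix a witnessing decomposition $B_X=\bigcup_n C_n$, refined symmetrically (by intersecting with the $-C_m$'s) so that $K:=\overline{\mathrm{co}}^{\|\cdot\|}\bigcup_n\overline{\mathrm{co}}^{w^*}C_n$ is symmetric and misses $rx_0^{**}$. Hahn-Banach separation in the normed space $X^{**}$ yields $\Phi\in X^{***}$ of norm one with $\Phi(rx_0^{**})>\beta+\eta$ and $|\Phi|\leq\beta$ on $K$; since $B_X\subset K$ this gives $\|\Phi|_X\|_{X^*}\leq\beta$. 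Using that $B_{X^*}$ is $\sigma(X^{***},X^{**})$-dense in $B_{X^{***}}$ (Goldstine), construct $(x_n^*)\subset B_{X^*}$ inductively so that odd-indexed terms approximate $\Phi$ and even-indexed terms approximate $-\Phi$, the approximation being sharp on a nested sequence of finite test subsets of $X^{**}$ containing $x_0^{**}$ together with representatives from $\overline{\mathrm{co}}^{w^*}C_1,\dots,\overline{\mathrm{co}}^{w^*}C_n$ at stage $n$. The alternation places cluster points of $(x_n^*(x_0^{**}))$ at $\pm\Phi(x_0^{**})$, giving $\delta_w(x_n^*)\geq 2c(\beta+\eta)$, while the piecewise bound $|\Phi|\leq\beta$ on each $\overline{\mathrm{co}}^{w^*}C_n$ transfers in the limit to the approximants and yields $\delta_{w^*}(x_n^*)\leq 2\beta+o(1)$, contradicting the $c$-Grothendieck inequality.

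The main obstacle is the converse, specifically propagating $\delta_{w^*}$-control of the constructed sequence from the finite test sets used in Goldstine to the full $B_X$. The decomposition $\{C_n\}$ is instrumental: the separation data already encode the bound $|\Phi|\leq\beta$ on each weak*-closed convex piece $\overline{\mathrm{co}}^{w^*}C_n$, so threading the diagonal construction through these pieces transfers the test-point approximation into piecewise norm control of the $x_n^*|_{C_n}$, which is precisely what $\delta_{w^*}$ measures.
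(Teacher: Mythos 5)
The paper disposes of this proposition in one line: the real case is quoted from Bendov\'a's Proposition 2.2, and the complex case is reduced to the real one via the isometry $\psi$ and Lemma~\ref{pomoc}. You instead set out to reprove the quoted result. Your forward implication (the envelope inclusion implies $c$-Grothendieck) is correct and complete as sketched: the countable decomposition of $B_X$ into the sets $C_{N,\lambda}$, the stability of the interval constraints under weak$^*$ closed convex hulls, finite convex combinations and norm limits, and the final passage $k\to\infty$, $\varepsilon\to 0$ all work. (Minor point: the reduction of the complex case to the real one needs, besides Lemma~\ref{pomoc}, the facts that $\psi[B_{X^{\ast\ast}}]=B_{(X_R)^{\ast\ast}}$ and $\psi[\mathrm{(I)}\mathrm{-env}(B_X)]=\mathrm{(I)}\mathrm{-env}(B_{X_R})$, which are the bullet items preceding that lemma.)

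The converse, however, has a genuine gap at exactly the step you yourself flag as the main obstacle, and the proposed resolution is not an argument. You choose $x_n^*\in B_{X^*}$ approximating $\pm\Phi$ on finite test sets consisting of $x_0^{**}$ together with finitely many ``representatives'' of $\overline{\mathrm{co}}^{w^*}C_1,\dots,\overline{\mathrm{co}}^{w^*}C_n$, and then assert that the bound $|\Phi|\le\beta$ on each piece ``transfers in the limit to the approximants'', yielding $\delta_{w^*}(x_n^*)\le 2\beta+o(1)$. It does not. The modulus $\delta_{w^*}$ is a supremum over all $x\in B_X$, the sets $C_m$ are arbitrary (typically nonseparable) subsets of $B_X$, and agreement of $x_n^*$ with $\pm\Phi$ at finitely many --- or even countably many --- points of $\overline{\mathrm{co}}^{w^*}C_m$ gives no control of $\sup_{C_m}|x_n^*|$, which can stay equal to $1$ at every stage. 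Weak$^*$ approximation on test points simply does not propagate sup bounds over infinite sets, so the ``piecewise norm control of $x_n^*|_{C_n}$'' is precisely what your construction fails to deliver, and without it the contradiction with the $c$-Grothendieck inequality never materializes.

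The missing ingredient is a renorming trick rather than more test points. Taking the $C_n$ increasing and symmetric, for each $n$ consider the equivalent norm on $X$ whose unit ball is $\overline{\mathrm{co}}\left(B_X\cup\tfrac{1}{\beta+\varepsilon}C_n\right)$; its bidual ball is $G_n:=\overline{\mathrm{co}}^{w^*}\left(B_{X^{\ast\ast}}\cup\tfrac{1}{\beta+\varepsilon}\overline{\mathrm{co}}^{w^*}C_n\right)$, and its third-dual ball is the polar $G_n^{\circ}$, which contains $\Phi$ and whose elements $\Psi$ satisfy $\|\Psi\|\le 1$ and $|\Psi|\le\beta+\varepsilon$ on $C_n$. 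Goldstine applied to the renormed space makes $X^*\cap G_n^{\circ}$ weak$^*$ dense in $G_n^{\circ}$, which produces $x_n^*\in B_{X^*}$ with $\sup_{C_n}|x_n^*|\le\beta+\varepsilon$ \emph{and} $x_0^{**}(x_n^*)$ close to $(-1)^n\Phi(x_0^{**})$ simultaneously. With that, your alternating scheme goes through; this is essentially how the proof cited by the paper proceeds.
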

\begin{proof}
Real case is contained in~\cite[Proposition 2.2]{Bendova}. As far as complex case is concerned, we get the conclusion in consideration of $B_{X}=B_{X_{R}}$, $B_{\left(X_{R}\right)^{\ast\ast}}=\psi[B_{X^{\ast\ast}}]$, $\psi\left[\mathrm{(I)}\mathrm{-env}(B_{X})\right]=\mathrm{(I)}\mathrm{-env}(\psi[B_{X}])=\mathrm{(I)}\mathrm{-env}(B_{X_R})$ and Lemma~\ref{pomoc}.
\end{proof}

\begin{proof}[The proof of Proposition~\ref{equivalence}]
The equivalence of (i) and (iii) is obtained by the combination of \cite[Proposition 2.2]{Bendova} and \cite[Proposition 4.2]{Kalenda}.\\
$(i)\Leftrightarrow(ii):$	By~\cite[Proposition 2.2]{Bendova} we have the equivalence: $C(K;\mathbb{R})$ is 1-Grothendieck if and only if $\mathrm{(I)}\mathrm{-env}\left(B_{C(K;\mathbb{R})}\right)=B_{C(K;\mathbb{R})^{\ast\ast}}$. By~\cite[Proposition 5.1]{Kalenda} we get $\mathrm{(I)}\mathrm{-env}\left(B_{C(K;\mathbb{R})}\right)=B_{C(K;\mathbb{R})^{\ast\ast}}$ if and only if $\mathrm{(I)}\mathrm{-env}\left(B_{C(K;\mathbb{C})}\right)=B_{C(K;\mathbb{C})^{\ast\ast}}$ and according to Proposition~\ref{complex_Bendova} $\mathrm{(I)}\mathrm{-env}\left(B_{C(K;\mathbb{C})}\right)=B_{C(K;\mathbb{C})^{\ast\ast}}$ iff $C(K;\mathbb{C})$ is 1-Grothendieck.
\end{proof}

\section{The proper proof} \label{real}

\begin{definition} \label{def_mnozinove_funkce}
Let $\mathcal{S}$ be a family of sets such that $A\cup B\in\mathcal{S}$ whenever $A,\,B\in\mathcal{S}$. We say that a map $\varphi:\,\mathcal{S}\longrightarrow\mathbb{R}$ is
\begin{itemize}
\item \textit{superadditive} if $\varphi(A)+\varphi(B)\leq\varphi(A\cup B)$ whenever $A,\,B\in\mathcal{S}$ and $A\cap B=\emptyset$,
\item \textit{subadditive} if $\varphi(A\cup B)\leq\varphi(A)+\varphi(B)$ whenever $A,\,B\in\mathcal{S}$ and $A\cap B=\emptyset$,
\item \textit{additive} if $\varphi$ is superadditive and subadditive simultanously,
\item \textit{monotone} if $\varphi(A)\leq\varphi(B)$ whenever $A,\, B\in\mathcal{S}$ and $A\subset B$.
\end{itemize}
\end{definition}

\begin{remark} Let $\mathcal{S}$ be an algebra of sets and $\varphi:\,\mathcal{S}\longrightarrow\mathbb{R}$. 
\begin{enumerate}
\item If $\varphi$ is superadditive and attains only nonnegative values then is necessarily monotone.
\item If $\varphi$ is subadditive and monotone then $\varphi(A\cup B)\leq\varphi(A)+\varphi(B)$ whenever $A,\,B\in\mathcal{S}$.
\end{enumerate}
\end{remark}

From now on $\mathcal{U}(S)$ denotes the algebra of all clopen subsets of a totally disconnected compact space $S$.

\begin{lemma} \label{oddeleni_obojetnou_mnozinou_v_totalne_nesouvislem}
Let $S$ be a totally disconnected compact space. Whenever $A,\, B\subset S$ are disjoint closed sets then there exists a clopen set $M\subset S$ such that $A\subset M$ and $B\subset S\setminus  M$. 
\end{lemma}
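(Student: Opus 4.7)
The plan is a standard two-step compactness argument using the pointwise separation axiom in the definition of totally disconnected space, combined with the observation that the algebra $\mathcal{U}(S)$ is closed under finite unions, finite intersections, and complements.

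First I would fix a point $a \in A$ and separate it from $B$. For each $b \in B$ the hypothesis gives a clopen $U_b \subset S$ with $a \in U_b$ and $b \notin U_b$; equivalently, $b \in S \setminus U_b$ and the latter set is clopen. The family $\{S \setminus U_b : b \in B\}$ is then an open cover of the closed (hence compact) set $B$, so by compactness it admits a finite subcover indexed by $b_1, \dots, b_n \in B$. Setting $V_a := \bigcap_{i=1}^n U_{b_i}$ gives a clopen set (finite intersections of clopen sets are clopen) containing $a$ and disjoint from $B$.

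Next I would run the same compactness trick with the roles of $A$ and $B$ switched in spirit. For each $a \in A$ the set $V_a$ just constructed is a clopen neighbourhood of $a$ that misses $B$, so $\{V_a : a \in A\}$ is an open cover of the compact set $A$, yielding a finite subcover $V_{a_1}, \dots, V_{a_m}$. Then $M := \bigcup_{j=1}^m V_{a_j}$ is clopen (finite unions of clopen sets are clopen), contains $A$, and is disjoint from $B$, i.e.\ $B \subset S \setminus M$, as required.

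There is no serious obstacle here; the argument is essentially the standard proof that a compact Hausdorff space with enough clopen sets to separate points is zero-dimensional in the strong sense (disjoint closed sets are separated by a clopen partition). The only detail worth noting is that one must invoke compactness \emph{twice}---once to pass from separating $a$ from each individual $b \in B$ to separating $a$ from all of $B$ at once, and a second time to patch these pointwise separations of $A$ into a single clopen set containing $A$.
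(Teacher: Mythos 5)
Your proof is correct and complete. It differs in route from the paper's: the paper separates $A$ and $B$ by disjoint open sets using normality of the compact Hausdorff space, and then shrinks the open set around $A$ to a finite union of basic clopen sets, which presupposes that a totally disconnected compact space has a basis of clopen sets. That basis fact is itself nontrivial and is not proved in the paper; it is essentially equivalent to the first compactness step you carry out. Your argument works directly from the definition as stated (any two distinct points are separated by a clopen set) and uses compactness twice --- once to produce, for each $a\in A$, a clopen neighbourhood $V_a$ of $a$ missing $B$, and once to patch finitely many of the $V_a$ into a single clopen $M\supset A$ disjoint from $B$. The payoff is a fully self-contained and more elementary proof; the paper's version is shorter on the page but quietly outsources the zero-dimensionality of totally disconnected compact spaces. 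Both are standard, and your version would arguably be the more rigorous one to include.
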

\begin{proof}
Every compact Hausdorff space is normal, so two disjoint compact subsets can be separated by disjoint open sets. Moreover, these sets can be replaced by finite unions of basis elements and totally disconnected compact space has a basis consisting of clopen sets.
%2. Suppose that $\sup\mathcal{B}$ exists. Since $\overline{\bigcup\mathcal{B}}$ is the smallest closed set containing $\bigcup\mathcal{B}$ and $\sup\mathcal{B}$ is a closed set containing $\bigcup\mathcal{B}$, $\sup\mathcal{B}$ is surely a superset of $\overline{\bigcup\mathcal{B}}$. To prove the converse inclusion suppose that there is $x\in\sup\mathcal{B}$ such that $x\notin\overline{\bigcup\mathcal{B}}$. Then according to 1. there is a clopen set $M$ such that $M\supset\overline{\bigcup\mathcal{B}}$ and $x\notin M$. But in such a case $M$ is an upper bound of $\mathcal{B}$ in $\mathcal{U}(S)$ and thus, $\sup\mathcal{B}\subset M$, yet $\sup\mathcal{B}\not\subset M$ due to the element $x$. It is a contradiction. Hence, $\sup\mathcal{B}\subset\overline{\bigcup\mathcal{B}}$. Thus $\sup\mathcal{B}=\overline{\bigcup\mathcal{B}}$, in particular, the latter set is open. \\
%Conversely, suppose that $\overline{\bigcup\mathcal{B}}$ is open. Being at the same time closed, it belongs to $\mathcal{U}(S)$. Further, it is an upper bound of $\mathcal{B}$ in $\mathcal{U}(S)$. We show that it is the least upper bound in $\mathcal{U}(S)$. Let $U$ be an upper bound of $\mathcal{B}$ in $\mathcal{U}(S)$. Hence $\bigcup\mathcal{B}\subset U$ and since $U$ is closed, $\overline{\bigcup\mathcal{B}}\subset U$. Thus $\overline{\bigcup\mathcal{B}}$ is the least upper bound in $\mathcal{U}(S)$.
\end{proof}

The following two lemmas will be used in Lemma~\ref{Keylemma} (Key lemma). Both are in principle proved in~\cite[Lemma 4.4, 4.5]{Kalenda}. Although the assumptions in Lemma~\ref{Lemma1} are a little weaker (in ~\cite[Lemma 4.4]{Kalenda} there are additive functions), the proof is the same.

\begin{lemma}\label{Lemma1}
Let $\sigma_{n}$, $n\in\mathbb{N}$, be a sequence of nonnegative superadditive functions on $\mathcal{P}(\mathbb{N})$ satisfying for each $n\in\mathbb{N}$ the following special additive condition $$\sigma_{n}(A\cup B)=\sigma_{n}(A)+\sigma_{n}(B)$$ whenever $A\cap B=\emptyset$ and one of the sets $A$, $B$ is finite. Let further $N\subset\mathbb{N}$ infinite and $\varepsilon>0$ be such that $\sigma_{n}(F)<\varepsilon/2$ for each finite $F\subset N$ and for each $n\in\mathbb{N}$. Then there exists an infinite set $U\subset N$ such that $\sigma_{n}(U)<\varepsilon$ for all $n\in\mathbb{N}$.
\end{lemma}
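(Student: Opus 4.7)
The plan is a diagonal halving argument: superadditivity lets me drive each $\sigma_n$ to zero along a nested family of infinite sets, and the special additivity then promotes this tail control to a bound on the whole of $U$.

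Fix an enumeration $(m_k)_{k\in\mathbb{N}}$ of $\mathbb{N}$ in which every index appears infinitely often, and inductively construct infinite sets $N = N_0 \supset N_1 \supset N_2 \supset \cdots$. At stage $k$, partition $N_{k-1}$ into two disjoint infinite halves $A_k$ and $B_k$; superadditivity yields $\sigma_{m_k}(A_k)+\sigma_{m_k}(B_k) \leq \sigma_{m_k}(N_{k-1})$, so choosing $N_k$ to be whichever half minimizes $\sigma_{m_k}$ gives $\sigma_{m_k}(N_k) \leq \tfrac{1}{2}\sigma_{m_k}(N_{k-1})$. Monotonicity (from superadditivity plus nonnegativity, as in the remark following Definition~\ref{def_mnozinove_funkce}) yields $\sigma_n(N_k) \leq \sigma_n(N_{k-1})$ for every $n$; since each $n$ equals $m_k$ for infinitely many $k$, and the sequence $\sigma_n(N_k)$ is bounded above by the real number $\sigma_n(N_0)$, the repeated halving forces $\lim_k \sigma_n(N_k) = 0$ for every $n$.

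Finally, choose a diagonal subsequence $u_k \in N_k \setminus \{u_1,\ldots,u_{k-1}\}$ and set $U := \{u_k : k \in \mathbb{N}\}$. Since $u_j \in N_j \subset N_k$ for $j \geq k$, the tail $U \setminus \{u_1,\ldots,u_{k-1}\}$ sits inside $N_k$; applying the special additive hypothesis to the partition $U = \{u_1,\ldots,u_{k-1}\} \cup (U \setminus \{u_1,\ldots,u_{k-1}\})$ and invoking the assumed bound on finite subsets of $N$ gives
\[
\sigma_n(U) = \sigma_n(\{u_1,\ldots,u_{k-1}\}) + \sigma_n(U \setminus \{u_1,\ldots,u_{k-1}\}) < \tfrac{\varepsilon}{2} + \sigma_n(N_k).
\]
Letting $k\to\infty$ produces $\sigma_n(U) \leq \varepsilon/2 < \varepsilon$, as desired. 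I do not anticipate any serious obstacle; the key point is the division of labor between the two hypotheses, with superadditivity alone fueling the halving on pairs of infinite halves (where the special additive condition does not apply), and the special additive condition entering only at the final step to peel off the finite initial segment of $U$ from its controlled tail.
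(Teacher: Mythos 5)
Your proof is correct. Bear in mind that the paper itself does not prove this lemma at all: it only remarks that the proof of Kalenda's Lemma~4.4 (stated there for additive functions) carries over verbatim to these weaker hypotheses. Your bisection-plus-diagonalization argument is therefore a genuinely self-contained alternative, and it checks out: superadditivity together with nonnegativity yields monotonicity, repeated halving along an enumeration in which every index recurs infinitely often forces $\sigma_n(N_k)\to 0$ for every $n$ (the nonincreasing sequence $\sigma_n(N_k)$ has a limit $L\ge 0$ satisfying $L\le L/2$), and the decomposition $U=\{u_1,\dots,u_{k-1}\}\cup\bigl(U\setminus\{u_1,\dots,u_{k-1}\}\bigr)$ is exactly the one place where the special additive condition is indispensable --- superadditivity alone would give $\sigma_n(U)\ge\sigma_n(\{u_1,\dots,u_{k-1}\})+\sigma_n(U\setminus\{u_1,\dots,u_{k-1}\})$, which points the wrong way. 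Your explicit bookkeeping of which hypothesis powers which step (bisection of infinite sets uses only superadditivity, where the special condition is unavailable anyway; peeling off the finite initial segment uses the special additivity; the finite-subset hypothesis controls the peeled-off piece, and monotonicity controls the tail via $N_k$) is a useful clarification of why the lemma survives the weakening from additive to superadditive functions, which is precisely the point the paper asserts without argument.
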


\begin{lemma}\label{Lemma2}
Let $\lambda_{n}$, $n\in\mathbb{N}$, be a sequence of nonnegative additive functions on $\mathcal{P}(\mathbb{N})$ such that
\begin{enumerate}
\item $\lambda_{n}(\mathbb{N})\leq 1$ for each $n\in\mathbb{N}$,
\item $\lim_{n\rightarrow\infty}\lambda_{n}(\{k\in\mathbb{N}:\,k\geq n\})=0$.
\end{enumerate}
Then for each $\varepsilon>0$ there exists an increasing sequence of positive integers $p_0<p_1<p_2<\ldots$ such that for each infinite $U\subset\mathbb{N}$ we have $$\liminf_{n\rightarrow\infty}\left(\bigcup_{j\in\mathbb{N}\setminus U}\left\{k\in\mathbb{N}:\,p_j-1\leq k<p_j\right\}\right)\leq\varepsilon\text{.}$$
\end{lemma}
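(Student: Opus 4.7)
I read the conclusion (up to apparent typos $p_j-1\leadsto p_{j-1}$ and a missing $\lambda_n$) as
$\liminf_{n\to\infty}\lambda_n\bigl(\bigcup_{j\in\mathbb{N}\setminus U}[p_{j-1},p_j)\bigr)\le\varepsilon$:
one partitions a tail of $\mathbb{N}$ into blocks $I_j=[p_{j-1},p_j)$ so that, for every infinite $U\subset\mathbb{N}$, infinitely many of the $\lambda_n$ carry total mass at most $\varepsilon$ on the discarded blocks. The plan is to combine a compactness extraction with a step-by-step choice: in each block $I_j$ I will plant a distinguished index $n(j)$ at which $\lambda_{n(j)}$ has very little mass on the earlier portion $[p_0,p_{j-1})$.

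By sequential compactness of $[0,1]^{\mathbb{N}}$ in the product topology, I first extract a subsequence $(\lambda_{n_k})$ such that $\lambda_{n_k}(\{m\})\to c_m$ for every $m\in\mathbb{N}$. Finite additivity on each initial segment $[0,M)$, combined with hypothesis (1), forces $\sum_m c_m\le1$, so the tail $\sum_{m\ge M}c_m$ tends to $0$ with $M$. I fix $p_0$ large enough that both $\sum_{m\ge p_0}c_m<\varepsilon/4$ and, by (2), $\lambda_n(\{k\ge n\})<\varepsilon/4$ whenever $n\ge p_0$. Recursively, once $p_{j-1}$ is defined, finite additivity on the finite set $[p_0,p_{j-1})$ gives $\lim_k\lambda_{n_k}([p_0,p_{j-1}))=\sum_{m=p_0}^{p_{j-1}-1}c_m<\varepsilon/4$; I choose $k$ with $n_k>p_{j-1}$ and $\lambda_{n_k}([p_0,p_{j-1}))<\varepsilon/4$, set $p_j:=n_k+1$, and record the witness $n(j):=n_k\in I_j$.

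To verify, take an arbitrary infinite $U=\{j_1<j_2<\cdots\}$. Since $j_l\in U$, the block $I_{j_l}$ is omitted from $\bigcup_{j\notin U}I_j$, which therefore lies inside $[p_0,p_{j_l-1})\cup[p_{j_l},\infty)$. Evaluating $\lambda_{n(j_l)}$ on this union, the first piece contributes less than $\varepsilon/4$ by construction, while the second is bounded by $\lambda_{n(j_l)}(\{k\ge n(j_l)\})<\varepsilon/4$ (using $p_{j_l}=n(j_l)+1>n(j_l)\ge p_0$ together with monotonicity). The total is below $\varepsilon/2$, and $n(j_l)\to\infty$ as $l\to\infty$, so the desired $\liminf$ is at most $\varepsilon/2<\varepsilon$.

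The main obstacle is respecting the \emph{finite}-additivity setting: one cannot infer $\lambda_n(\{k\ge M\})\to0$ as $M\to\infty$ for a fixed $n$, so the tail behaviour must be accessed only through the diagonal hypothesis (2), while the initial-segment control has to come entirely from pointwise convergence of the extracted subsequence on the \emph{finite} sets $[p_0,p_{j-1})$. Once this dichotomy is respected, the recursion is essentially forced.
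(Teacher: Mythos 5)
Your proof is correct: the diagonal extraction of a subsequence along which $\lambda_{n_k}(\{m\})$ converges for every $m$, the choice of $p_0$ controlling both the limit tail $\sum_{m\ge p_0}c_m$ and the diagonal tails $\lambda_n(\{k\ge n\})$, and the planting of a witness $n(j)\in[p_{j-1},p_j)$ with mass less than $\varepsilon/4$ on $[p_0,p_{j-1})$ combine, via subadditivity and monotonicity of the finitely additive $\lambda_n$, to give $\liminf_{n\to\infty}\lambda_n\bigl(\bigcup_{j\notin U}[p_{j-1},p_j)\bigr)\le\varepsilon/2$ for every infinite $U$. The paper itself offers no proof of this lemma, deferring to Kalenda's Lemma 4.5, and your argument is essentially that standard one; you also correctly diagnosed the two typographical slips in the statement (the missing $\lambda_n$ and $p_j-1$ in place of $p_{j-1}$).
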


\begin{definition} [$\varepsilon$-separation, separation] Let $S$ be a totally disconnected compact space and $\mu$ and $\nu$ are nonnegative functions on $\mathcal{U}(S)$. If $\varepsilon>0$ and $A\in\mathcal{U(S)}$, we say that $A$ $\varepsilon$\textit{-separates} $\mu$ \textit{and} $\nu$ if $\mu(A)<\varepsilon$ and $\nu(S\setminus A)<\varepsilon$. Further, $\mu$ and $\nu$ are called $\mathcal{U(S)}$\textit{-separated} if for each $\varepsilon>0$ there is a clopen set $A\subset S$ which $\varepsilon$-separates $\mu$ and $\nu$. 
\end{definition}

\begin{definition} [Uniform separation] Let $S$ be a totally disconnected compact space and $\mu_{n}$ and $\nu_{n}$, $n\in\mathbb{N}$, are two sequences of nonnegative functions on $\mathcal{U}(S)$. We say for these two sequences to be \textit{uniformly} $\mathcal{U(S)}$\textit{-separated} if for each $\varepsilon>0$ there is a clopen set $A\subset S$ which $\varepsilon$-separates $\mu_m$ and $\nu_n$ for all $m\in\mathbb{N}$ and $n\in\mathbb{N}$. 
\end{definition}

From now on, $S$ is a Haydon space and $\tau(S)$ is the corresponding topology.

\begin{lemma}[Key lemma] \label{Keylemma}
Let $\mu_n$, $n\in\mathbb{N}$, be a sequence of nonnegative additive functions defined on $\mathcal{U}(S)$ and $\nu_n$, $n\in\mathbb{N}$, be a sequence of nonnegative monotone additive functions defined on $\tau(S)$. Further, assume that the following conditions hold
\vskip 0.25cm % vytvo�� vertik�ln� mezeru
\begin{tabular}{rp{10cm}}
  $(i)$     & $\nu_n(S)=1$ for each $n\in\mathbb{N}$; \\
  $(ii)$    & for each $n\in\mathbb{N}$ and each $\varepsilon>0$ there exists $A\in\mathcal{U}(S)$ which $\varepsilon$-separates $\mu_m$ and $\nu_n$ for all $m\in\mathbb{N}$.
\end{tabular}
\vskip 0.25cm % vytvo�� vertik�ln� mezeru
\noindent Then there exists an infinite set $\Lambda\subset\mathbb{N}$ such that for each $\varepsilon>0$ there is a set $Q\in\mathcal{U}(S)$ $\varepsilon$-separating $\mu_m$ and $\nu_n$ for all $m\in\mathbb{N}$ and all $n\in\Lambda$. 
\end{lemma}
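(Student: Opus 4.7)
My plan is to construct the subsequence $\Lambda$ through the following chain: use condition (ii) to build clopen ``approximate supports'' $A_n$ for each $\nu_n$, disjointify them into pairwise disjoint clopens $C_n$, group these into blocks via Lemma \ref{Lemma2}, close up using the SCP of $S$, and finally invoke Lemma \ref{Lemma1} to control the $\mu_m$-measure of the resulting clopen set.

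Concretely, for each $n$ I apply condition (ii) with $\varepsilon_n = 4^{-n}$ to obtain $A_n \in \mathcal{U}(S)$ with $\mu_m(A_n) < 4^{-n}$ for all $m$ and $\nu_n(S \setminus A_n) < 4^{-n}$, and I set $C_n := A_n \setminus \bigcup_{j<n} A_j$. These $C_n$ are pairwise disjoint clopens, and crucially $C_k \cap A_n = \emptyset$ for $k > n$, hence $\nu_n(\bigcup_{k>n} C_k) < 4^{-n}$. Define the additive set function $\lambda_n(F) := \nu_n\bigl(\bigcup_{k \in F \setminus \{n\}} C_k\bigr)$ on $\mathcal{P}(\mathbb{N})$; it is bounded by $1$ and satisfies $\lambda_n(\{k : k \geq n\}) < 4^{-n} \to 0$, so Lemma \ref{Lemma2} applies. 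Fixing $\varepsilon > 0$, it produces an increasing sequence $p_0 < p_1 < \cdots$ and blocks $I_j := [p_{j-1}, p_j)$. The clopens $V_j := \bigcup_{k \in I_j} C_k$ are pairwise disjoint, so by SCP there is an infinite $U \subset \mathbb{N}$ with $W := \overline{\bigcup_{j \in U} V_j} \in \mathcal{U}(S)$. The conclusion of Lemma \ref{Lemma2} applied to this $U$ then yields an infinite $\Lambda \subset \bigcup_{j \in U} I_j$ on which $\nu_n\bigl(\bigcup_{j \notin U} V_j\bigr) \leq 2\varepsilon$; combined with $\nu_n(\bigcup_k C_k) > 1 - 4^{-n}$ this gives $\nu_n(S \setminus W) < 3\varepsilon$ for $n \in \Lambda$ sufficiently large.

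The principal obstacle is bounding $\mu_m(W)$ from above: because $\mu_m$ is only finitely additive, the closure in the definition of $W$ may contribute extra mass beyond $\sum_{j \in U} \mu_m(V_j)$, so a naive sum of $\mu_m$-contributions need not suffice. Lemma \ref{Lemma1} is the intended tool for this: applied to appropriate nonnegative superadditive set functions on $\mathcal{P}(\mathbb{N})$ which record the $\mu_m$-mass accumulated by clopen closures of sub-unions of the $V_j$'s, it will let us refine $U$ to a further infinite $U' \subset U$ with $\mu_m\bigl(\overline{\bigcup_{j \in U'} V_j}\bigr) < \varepsilon$ uniformly in $m$. To get a single $\Lambda$ serving every $\varepsilon > 0$, I would iterate the construction diagonally along $\varepsilon_k \downarrow 0$: nest $U_1 \supset U_2 \supset \cdots$, extract a diagonal $\Lambda$, and let $Q_k$ be the associated clopen closure for each $\varepsilon_k$.
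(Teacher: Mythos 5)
Your architecture is the paper's (approximate clopen supports from hypothesis (ii), a block decomposition via Lemma~\ref{Lemma2}, SCP, Lemma~\ref{Lemma1}, then a diagonal argument), but two steps as written do not go through. First, the disjointification $C_n:=A_n\setminus\bigcup_{j<n}A_j$ together with the exclusion of $\{n\}$ in $\lambda_n$ severs the link between $\lambda_n$ and the quantity you must bound. Lemma~\ref{Lemma2} controls $\liminf_n\lambda_n\bigl(\bigcup_{j\in\mathbb{N}\setminus U}I_j\bigr)$ over \emph{all} $n$ and gives you no right to locate the good indices inside $\bigcup_{j\in U}I_j$; yet for an index $n$ lying in a discarded block $I_{j_0}$ with $j_0\notin U$ one has $\nu_n\bigl(\bigcup_{j\notin U}V_j\bigr)\geq\nu_n(C_n)$, which can be close to $1$ and is invisible to $\lambda_n$. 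So the infinitely many $n$ produced by Lemma~\ref{Lemma2} may all be indices for which $\nu_n(S\setminus W)$ is close to $1$. The paper avoids this by taking the $C_n$ \emph{increasing}, $C_n=A_1\cup\cdots\cup A_n$, and $\lambda_n(D)=\nu_n\bigl(\bigcup_{k\in D}(C_{k+1}\setminus C_k)\bigr)$ with no exclusion: the tail condition $\lambda_n(\{k\geq n\})\leq\nu_n(S\setminus C_n)$ then holds automatically, and $\lambda_n\bigl(\bigcup_{j\notin U}I_j\bigr)$ equals exactly the discarded $\nu_n$-mass.

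Second, the $\mu$-side is quantitatively off. Because you fix $\varepsilon_n=4^{-n}$ before $\varepsilon$, finite sums $\sum_{n\in F}\mu_m(A_n)$ are bounded only by $1/3$, so Lemma~\ref{Lemma1} (whose hypothesis requires the bound $\varepsilon/2$ on finite sets in order to conclude $\varepsilon$ on an infinite set) can only deliver a bound of about $2/3$, not $<\varepsilon$; the paper instead fixes $\varepsilon$ first and chooses $\mu_m(A_n)<\varepsilon/2^{n+2}$, making every finite sum $<\varepsilon/4$. The order of operations also matters: Lemma~\ref{Lemma1} must be applied, \emph{before} SCP, to the sup-type superadditive functions $\sigma_m(D)=\sup\bigl\{\mu_m\bigl(\overline{\bigcup_{j\in E}V_j}\bigr):E\subset D,\ \overline{\bigcup_{j\in E}V_j}\in\mathcal{U}(S)\bigr\}$, whose superadditivity and special additivity (when one piece is finite) you would still need to verify; refining $U$ to $U'$ after forming $W$ destroys the clopenness of the closure. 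Finally, your diagonal step nests the wrong objects (the block-index sets $U_k$ rather than the index sets $\Lambda_k\subset\mathbb{N}$) and omits the repair of the finitely many early elements $c_1,\dots,c_{k-1}$ of $\Lambda$, for which $Q_k$ gives no information and which must be handled by unioning in further separating sets obtained from hypothesis (ii).
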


\begin{proof}
We will proceed much as Kalenda did in the proof of~\cite[Lemma 4.6]{Kalenda}. First of all, we will show that for each $\varepsilon>0$ there exists an infinite set $\Lambda_0\subset\mathbb{N}$ and $A\in\mathcal{U}(S)$ such that $A$ $\varepsilon$-separates $\mu_m$ and $\nu_n$ for all $m\in\mathbb{N}$ and all $n\in\Lambda_0$. \paragraph{} 
Fix $\varepsilon>0$. According to the condition (ii), for each $n\in\mathbb{N}$ choose $A_n\in\mathcal{U}(S)$ such that $$\mu_m(A_n)<\varepsilon/2^{n+2}\quad \text{and}\quad \nu_n(S\setminus A_n)<\varepsilon/2^{n+2}$$ for all $m\in\mathbb{N}$. Set $C_{n}:=A_1\cup\ldots\cup A_n$, $n\in\mathbb{N}$. Then obviously $C_n\in\mathcal{U}(S)$, $n\in\mathbb{N}$. For each $m\in\mathbb{N}$, being additive on $\mathcal{U}(S)$, $\mu_m$ is monotone and subadditive and hence satisfies \begin{equation*} \mu_m(C_n)\leq\mu_m(A_1)+\ldots +\mu_m(A_n)<\varepsilon/2^3+\ldots+\varepsilon/2^{n+2}<\varepsilon/4\end{equation*}
for all $n\in\mathbb{N}$. Further, for each $n\in\mathbb{N}$, being monotone on $\mathcal{U}(S)$, $\nu_n$ satisfies $$\nu_n(S\setminus C_n)\leq\nu_n(S\setminus A_n)<\varepsilon/2^{n+2}<\varepsilon/4\text{.}$$
\paragraph{} For each $n\in\mathbb{N}$ set $$\lambda_n(D):=\nu_n\left(\bigcup_{k\in D}C_{k+1}\setminus C_k\right),\quad D\subset\mathbb{N}\text{.}$$
We check that the conditions in Lemma~\ref{Lemma2} hold for the sequence $\lambda_n$, $n\in\mathbb{N}$.
For each $n\in\mathbb{N}$ it holds $\lambda_{n}(\mathbb{N})\leq 1$, as $\nu_n$ is monotone and $\nu_n(S)=1$, and $\lambda_n$ is a nonnegative additive set function on $\mathbb{N}$, as $\nu_n$ is a nonnegative additive function on $\tau(S)$ and $\left(C_{k+1}\setminus C_{k}\right)\cap\left(C_{l+1}\setminus C_{l}\right)=\emptyset$ whenever $k\neq l$, $k,l\in\mathbb{N}$, and for $\nu_n$ is monotone for each $n\in\mathbb{N}$, we are able to get the condition (2) in Lemma~\ref{Lemma2} by the following estimates for each $n\in\mathbb{N}$
\begin{align*} \lambda_n\left(\left\{k\in\mathbb{N}:\,k\geq n\right\}\right)&=\nu_n\left(\bigcup_{k\geq n}C_{k+1}\setminus C_{k}\right)=\nu_n\left(\bigcup_{k\geq n}C_{k+1}\setminus C_n\right) \\
&\leq\nu_n\left(S\setminus C_{n}\right)<\varepsilon/2^{n+2}\text{.}
\end{align*}
According to Lemma~\ref{Lemma2}, there exists a sequence $p_0<p_1<\ldots$ of positive integers such that for each infinite set $U\subset\mathbb{N}$ we have \begin{equation*}\liminf_{n\rightarrow\infty}\lambda_n\left(\bigcup_{j\in\mathbb{N}\setminus U}\left\{k\in\mathbb{N}:\,p_{j-1}\leq k\leq p_j\right\}\right)\leq\varepsilon/2\text{.}\label{witness}\end{equation*}
\paragraph{} 
Further, for each $n\in\mathbb{N}$ set $$\sigma_n(D):=\mathrm{sup}\left\{\mu_n\left(\overline{\bigcup_{j\in E}C_{p_j}\setminus C_{p_{j-1}}}\right):\,E\subset D\;\&\;\overline{\bigcup_{j\in E}C_{p_j}\setminus C_{p_{j-1}}}\in\mathcal{U}(S)\right\}\text{,}$$ whenever $D\subset\mathbb{N}$. Note that if $D\subset\mathbb{N}$ is finite then $$\sigma_n(D)=\mu_n\left(\bigcup_{j\in D}C_{p_j}\setminus C_{p_{j-1}}\right),\;n\in\mathbb{N}\text{.}$$ Now we need to check for the sequence $\sigma_n$, $n\in\mathbb{N}$, to have the properties stated in the assumptions of Lemma~\ref{Lemma1}. Fix $n\in\mathbb{N}$. Then $\sigma_n$ is nonnegative as $\mu_n$ is nonnegative and bounded. To show that $\sigma_n$ is superadditive, choose $D_1$ and $D_2$ two disjoint subsets of $\mathbb{N}$. Fix $\delta>0$. Let $E_i\subset D_i$ be such that $\overline{\bigcup_{j\in E_i}C_{p_j}\setminus C_{p_{j-1}}}$ is open and $$\sigma_{n}(D_i)-\delta/2<\mu_n\left(\overline{\bigcup_{j\in E_i}C_{p_j}\setminus C_{p_{j-1}}}\right), \quad\quad i=1,2\text{.}$$ Set $E:=E_1\cup E_2$. Then $E\subset D_1\cup D_2$, $\overline{\bigcup_{j\in E}C_{p_j}\setminus C_{p_{j-1}}}$ is open and 
\begin{align*}
\sigma_n(D_1\cup D_2)&\geq\mu_n\left(\overline{\bigcup_{j\in E}C_{p_j}\setminus C_{p_{j-1}}}\right) \\
&=\mu_n\left(\overline{\bigcup_{j\in E_1}C_{p_j}\setminus C_{p_{j-1}}}\right)+\mu_n\left(\overline{\bigcup_{j\in E_2}C_{p_j}\setminus C_{p_{j-1}}}\right) \\
&\geq\sigma_n(D_1)+\sigma_n(D_2)-\delta\text{.}
\end{align*} Since $\delta>0$ is arbitrary, we get
$\sigma_n(D_1\cup D_2)\geq\sigma_n(D_1)+\sigma_n(D_2)$.
If $D_1$ is moreover finite, the equality holds. It is enough to show the converse inequality. Fix any $E\subset D_1\cup D_2$ such that $\overline{\bigcup_{j\in E}C_{p_j}\setminus C_{p_{j-1}}}$ is open. Set $E_1:=E\cap D_1$ and $E_2:=E\cap D_2$. Since $E_1$ is finite, $\bigcup_{j\in E_1}C_{p_j}\setminus C_{p_{j-1}}$ is closed (and hence clopen), thus $$\overline{\bigcup_{j\in E_2}C_{p_j}\setminus C_{p_{j-1}}}=\overline{\bigcup_{j\in E}C_{p_j}\setminus C_{p_{j-1}}}\setminus\bigcup_{j\in E_1}C_{p_j}\setminus C_{p_{j-1}}$$ is open as well. We have
\begin{align*}\mu_n\left(\overline{\bigcup_{j\in E}C_{p_j}\setminus C_{p_{j-1}}}\right)&=\mu_n\left(\bigcup_{j\in E_1}C_{p_j}\setminus C_{p_{j-1}}\right)+\mu_n\left(\overline{\bigcup_{j\in E_2}C_{p_j}\setminus C_{p_{j-1}}}\right) \\
&\leq\sigma_n(D_1)+\sigma_n(D_2)\text{.}
\end{align*} 
Since $E$ is arbitrary, we get $\sigma_n(D_1\cup D_2)\leq\sigma_n(D_1)+\sigma_n(D_2)$. 
Furthermore, for each $n\in\mathbb{N}$ and each finite $F\subset\mathbb{N}$ we have $$\sigma_n(F)=\mu_n\left(\bigcup_{j\in F}C_{p_j}\setminus C_{p_{j-1}}\right)<\varepsilon/4 \text{.}$$ According to Lemma~\ref{Lemma1}, there exists an infinite set $U\subset\mathbb{N}$ such that $\sigma_n(U)<\varepsilon/2$ for each $n\in\mathbb{N}$. Since $\mathcal{U}(S)$ has the SCP, there is infinite $V\subset U$ such that $\overline{\bigcup_{j\in V}C_{p_j}\setminus C_{p_{j-1}}}$ is open. And by definition of $\sigma_n$, $n\in\mathbb{N}$, we have for all $n\in\mathbb{N}$ $$\mu_n\left(\overline{\bigcup_{j\in V}C_{p_j}\setminus C_{p_{j-1}}}\right)\leq\sigma_n(U)<\varepsilon/2\text{.}$$ 
Set $A:=C_{p_0}\cup\overline{\bigcup_{j\in V}C_{p_j}\setminus C_{p_{j-1}}}$. Then $A\in\mathcal{U}(S)$ and for each $n\in\mathbb{N}$ we have 
$$\mu_n(A)=\mu_n(C_{p_0})+\mu_{n}\left(\overline{\bigcup_{j\in V}C_{p_j}\setminus C_{p_{j-1}}}\right)<\varepsilon/4+\varepsilon/2<\varepsilon\text{.}$$
Further, for all $n\in\mathbb{N}$ we have
\begin{align*}
\nu_n(S\setminus A)&\leq\nu_n\left(S\setminus C_n\right)+\nu_n\left(\bigcup_{j\in\mathbb{N}\setminus V}C_{p_j}\setminus C_{p_{j-1}}\right)\\
&<\varepsilon/4+\lambda_n\left(\bigcup_{j\in\mathbb{N}\setminus V}\left\{k\in\mathbb{N}:\,p_{j-1}\leq k<p_j\right\}\right)\text{.}
\end{align*}
It follows \begin{align*}\liminf_{n\rightarrow\infty}\nu_n\left(S\setminus A\right)\leq\varepsilon/4+\liminf_{n\rightarrow\infty}\lambda_n\left(\bigcup_{j\in\mathbb{N}\setminus V}\left\{k\in\mathbb{N}:\,p_{j-1}\leq k<p_j\right\}\right)<\varepsilon\text{.}\end{align*}
Hence, $\nu_n(S\setminus A)<\varepsilon$ for infinitely many $n\in\mathbb{N}$, that is to say, there is infinite set $\Lambda_0\subset\mathbb{N}$ such that $A$ $\varepsilon$-separates $\mu_m$ and $\nu_n$ for all $m\in\mathbb{N}$ and for all $n\in\Lambda_0$.\\
Now, by induction we construct an infinite sequence of infinite sets $\mathbb{N}\supset\Lambda_1\supset\Lambda_2\supset\ldots$ such that for each $k\in\mathbb{N}$ there exists $A_k\in\mathcal{U}(S)$ which $2^{-(k+1)}$-separates $\mu_m$ and $\nu_n$ for all $m\in\mathbb{N}$ and all $n\in\Lambda_k$. Pick pairwise distinct elements $c_{k}\in\Lambda_k$, $k\in\mathbb{N}$, and set $\Lambda:=\{c_k:\,k\in\mathbb{N}\}$. Then $\Lambda\subset\mathbb{N}$ is infinite. Choose arbitrary $\varepsilon>0$. Then there exists $k\in\mathbb{N}$ such that $2^{-k}<\varepsilon$. For each $j\in\{1,\ldots\ ,k-1\}$ choose $B_j\in\mathcal{U}(S)$ which $\frac{1}{k2^{k+1}}$-separates $\mu_{m}$ and $\nu_{c_j}$ for all $m\in\mathbb{N}$. Set $Q:=B_1\cup\ldots\cup B_{k-1}\cup A_k$. Obviously, $Q\in\mathcal{U}(S)$. For each $m\in\mathbb{N}$ we have $$\mu_{m}(Q)\leq\frac{k-1}{k2^{k+1}}+\frac{1}{2^{k+1}}<\frac{1}{2^{k}}<\varepsilon\text{.}$$
If $j\in\{1,\ldots\,k-1\}$, then $$\nu_{c_j}(S\setminus Q)\leq\nu_{c_j}(S\setminus B_j)<\frac{1}{k2^{k+1}}<\varepsilon\text{,}$$
and if $j\geq k$, then $$\nu_{c_j}(S\setminus Q)\leq\nu_{c_j}(S\setminus A_k)<2^{-(k+1)}<\varepsilon\text{.}$$ Hereby the proof is finished.
\end{proof}

\begin{lemma} [Inductive lemma] \label{Inductivelylemma}
Let $\mu_n$, $n\in\mathbb{N}$, be a sequence of nonnegative monotone additive functions defined on $\tau(S)$ and  $\nu_n$, $n\in\mathbb{N}$, be a sequence of nonnegative additive functions defined on $\mathcal{U(S)}$. Further, suppose that the following conditions are satisfied
\vskip 0.25cm % vytvo�� vertik�ln� mezeru
\begin{tabular}{rp{10cm}}
  $(i)$      & $\mu_n(S)=1$ for each $n\in\mathbb{N}$; \\
  $(ii)$     & $\mu_m$ and $\nu_n$ are $\mathcal{U}(S)$-separated for each $m,\,n\in\mathbb{N}$.
\end{tabular}
\vskip 0.25cm % vytvo�� vertik�ln� mezeru
\noindent Then there exists an infinite set $\Lambda\subset\mathbb{N}$ such that for each $n\in\mathbb{N}$ and each $\varepsilon>0$ there is a clopen set $P\subset S$ which $\varepsilon$-separates $\mu_m$ and $\nu_n$ for all $m\in\Lambda$.
\end{lemma}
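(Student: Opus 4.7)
The plan is to mimic the diagonalisation at the end of the proof of Lemma~\ref{Keylemma}: I will build by induction a decreasing chain of infinite sets $\mathbb{N}=\Lambda_0\supset\Lambda_1\supset\Lambda_2\supset\ldots$ together with clopen separators, so that for each $k\in\mathbb{N}$ and each $\varepsilon>0$ some $A_{k,\varepsilon}\in\mathcal{U}(S)$ $\varepsilon$-separates $\mu_m$ and $\nu_k$ for all $m\in\Lambda_k$; then a standard diagonal choice yields $\Lambda$.

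The inductive step reduces to a careful application of the Key lemma with the roles of the two sides swapped and with a constant left-indexed sequence. Assume $\Lambda_{k-1}$ has been built and enumerate it increasingly as $\{c_1^{(k-1)}<c_2^{(k-1)}<\ldots\}$. Define the constant sequence $\tilde{\mu}_m:=\nu_k$ on $\mathcal{U}(S)$ (nonnegative additive) and $\tilde{\nu}_n:=\mu_{c_n^{(k-1)}}$ on $\tau(S)$ (nonnegative monotone additive with $\tilde{\nu}_n(S)=1$ by assumption~(i)). Hypothesis~(ii) of Lemma~\ref{Keylemma} collapses, since $\tilde{\mu}_m$ does not depend on $m$, to the existence for each $n$ and $\varepsilon$ of a single clopen $A$ satisfying $\nu_k(A)<\varepsilon$ and $\mu_{c_n^{(k-1)}}(S\setminus A)<\varepsilon$; this is obtained from the pairwise $\mathcal{U}(S)$-separation of $\mu_{c_n^{(k-1)}}$ and $\nu_k$ by passing to complements in $\mathcal{U}(S)$. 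The Key lemma then yields an infinite $\Lambda'\subset\mathbb{N}$ and, for every $\varepsilon>0$, a clopen $Q$ with $\nu_k(Q)<\varepsilon$ and $\mu_{c_n^{(k-1)}}(S\setminus Q)<\varepsilon$ for all $n\in\Lambda'$. Setting $\Lambda_k:=\{c_n^{(k-1)}:\,n\in\Lambda'\}$ and $A_{k,\varepsilon}:=S\setminus Q$ furnishes the required refinement.

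Once the $\Lambda_k$'s are built, pick pairwise distinct $c_k\in\Lambda_k$ and put $\Lambda:=\{c_k:\,k\in\mathbb{N}\}$. Given $n\in\mathbb{N}$ and $\varepsilon>0$, the set $A_{n,\varepsilon/2}$ already $\tfrac{\varepsilon}{2}$-separates $\mu_{c_k}$ from $\nu_n$ for every $k\geq n$, because $c_k\in\Lambda_k\subset\Lambda_n$. The finitely many $c_k$ with $k<n$ are handled by pairwise separation: for each such $k$ pick a clopen $B_k$ which $\delta$-separates $\mu_{c_k}$ and $\nu_n$ with $\delta:=\varepsilon/(2n)$. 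The intersection $P:=A_{n,\varepsilon/2}\cap B_1\cap\ldots\cap B_{n-1}$ is clopen, $\mu_m(P)<\varepsilon$ for every $m\in\Lambda$, and finite subadditivity plus monotonicity of $\nu_n$ (both of which follow from its being nonnegative additive, by the remark after Definition~\ref{def_mnozinove_funkce}) applied to $S\setminus P=(S\setminus A_{n,\varepsilon/2})\cup\bigcup_{j<n}(S\setminus B_j)$ give $\nu_n(S\setminus P)<\tfrac{\varepsilon}{2}+(n-1)\delta<\varepsilon$.

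The main obstacle is the mismatch of hypotheses: the Key lemma requires a uniform-in-the-left-index separator for each fixed right index, which is strictly stronger than the pairwise separation we have at hand. The decisive trick is that instantiating $\tilde{\mu}_m$ as a constant sequence trivialises uniformity in the left variable, so that hypothesis~(ii) of the Key lemma reduces exactly to the pairwise separation available to us; the rest is a diagonalisation modelled on the closing paragraph of the proof of Lemma~\ref{Keylemma}.
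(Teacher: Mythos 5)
Your proposal is correct and follows essentially the same route as the paper: an inductive application of the Key lemma with the left-hand sequence taken constant equal to $\nu_k$ and the right-hand sequence equal to the $\mu$'s indexed by the previously constructed set, followed by the same diagonal selection $c_k\in\Lambda_k$. The only cosmetic difference is that you assemble the final separator as an intersection $A_{n,\varepsilon/2}\cap B_1\cap\ldots\cap B_{n-1}$ rather than taking the complement of a union as the paper does, which is the same step up to De Morgan.
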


\begin{proof} We will apply Lemma~\ref{Keylemma} inductively. Set $\tilde{\mu_{n}}:=\nu_{1}$ and $\tilde{\nu_{n}}:=\mu_{n}$ for each $n\in\mathbb{N}$. Sequences $\tilde{\mu_{n}}$ and $\tilde{\nu_{n}}$, $n\in\mathbb{N}$, satisfy assumptions of Lemma~\ref{Keylemma}. Thus, there is an infinite set $\Lambda_{1}\subset\mathbb{N}$ such that for each $\varepsilon>0$ there exists $Q\in\mathcal{U}(S)$ which $\varepsilon$-separates $\nu_1$ and $\mu_m$ for all $m\in\Lambda_{1}$. Having already constructed infinite set $\Lambda_{k}$ for $k\in\mathbb{N}$ such that for each $\varepsilon>0$ there exists $Q\in\mathcal{U}(S)$ which $\varepsilon$-separates $\nu_k$ and $\mu_m$ for all $m\in\Lambda_{k}$, set $\tilde{\mu_{n}}:=\nu_{k+1}$, $\tilde{\nu_{n}}:=\mu_{i_n}$ for each $n\in\mathbb{N}$ where $\Lambda_k=\{i_1<i_2<\ldots\}$. Then again according to Lemma~\ref{Keylemma} there exists infinite set $\tilde{\Lambda}\subset\mathbb{N}$ such that for each $\varepsilon>0$ there exists $Q\in\mathcal{U}(S)$ which $\varepsilon$-separates $\nu_{k+1}$ and $\mu_{i_m}$ for all $m\in\tilde{\Lambda}$. Set $\Lambda_{k+1}:=\{i_m:\,m\in\tilde{\Lambda}\}$. Then for each $k\in\mathbb{N}$ $\Lambda_{k+1}\subset\Lambda_{k}$ and for each $\varepsilon>0$ there exists $Q\in\mathcal{U}(S)$ which $\varepsilon$-separates $\nu_k$ and $\mu_m$ for all $m\in\Lambda_k$. Choose pairwise distinct elements $c_{n}\in\Lambda_{n}$, $n\in\mathbb{N}$, and set $\Lambda:=\{c_n:\,n\in\mathbb{N}\}$. Then $\Lambda$ is infinite. \\
Now fix $n\in\mathbb{N}$ and $\varepsilon>0$. Let $Q\in\mathcal{U}(S)$ $\varepsilon$/2-separate $\nu_n$ and $\mu_m$ for all $m\in\Lambda_n$ and since $\mathcal{U}(S)$-separation is a symmetric relation, from the condition $(ii)$ it follows that for each $i=1,\ldots,n-1$ there exists $C_i\in\mathcal{U}(S)$ $\varepsilon/2n$-separating $\nu_n$ and $\mu_{c_i}$. Set $A:=C_1\cup\ldots\cup C_{n-1}\cup Q$. Then from the additivity of $\nu_n$ on $\mathcal{U}(S)$ we get $$\nu_n(A)\leq\nu_n(C_1)+\ldots+\nu_n(C_1)+\nu_n(Q)\leq\frac{n-1}{n}\cdot\frac{\varepsilon}{2}+\frac{\varepsilon}{2}<\varepsilon\text{.}$$ 
If $j\in\{1,\ldots\,n-1\}$, then from the monotony of $\mu_{c_j}$ it implies $$\mu_{c_j}(S\setminus A)\leq\mu_{c_j}(S\setminus C_j)<\frac{\varepsilon}{2n}<\varepsilon\text{,}$$
and if $j\geq n$, then (again from the monotony of $\mu_{c_j}$) $$\mu_{c_j}(S\setminus A)\leq\mu_{c_j}(S\setminus Q)<\frac{\varepsilon}{2}<\varepsilon\text{.}$$ Setting $P:=S\setminus A$, we get the conclusion. 
\end{proof}

By synthesis of Lemma~\ref{Inductivelylemma} (Inductive lemma) and Lemma~\ref{Keylemma} (Key lemma) we get the following proposition.

\begin{proposition} \label{Substance}
Let $\mu_n$ and $\nu_n$, $n\in\mathbb{N}$, be sequences of nonnegative monotone additive functions defined on $\tau(S)$. Further, assume that the following conditions hold
\vskip 0.25cm % vytvo�� vertik�ln� mezeru
\begin{tabular}{rp{10cm}}
  $(i)$      & $\mu_n(S)=1$ and $\nu_n(S)=1$ for each $n\in\mathbb{N}$; \\
  $(ii)$     & $\mu_m$ and $\nu_n$ are $\mathcal{U}(S)$-separated for each $m,\,n\in\mathbb{N}$.
\end{tabular}
\vskip 0.25cm % vytvori vertikalni mezeru
\noindent Then there exists an infinite set $\Lambda\subset\mathbb{N}$ such that for each $\varepsilon>0$ there is a clopen set $A\subset S$ which $\varepsilon$-separates $\mu_m$ and $\nu_n$ for all $m\in\Lambda$ and $n\in\Lambda$. In other words,  there exists an infinite set $\Lambda\subset\mathbb{N}$ such that the sequences $\mu_n$ and $\nu_n$, $n\in\Lambda$, are uniformly $\mathcal{U}(S)$-separated.
\end{proposition}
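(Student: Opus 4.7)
The plan is to apply the Inductive lemma once and then the Key lemma once, with a reindexing step in between so that the index set produced by the second application automatically lies inside the one produced by the first.

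First I would apply Lemma~\ref{Inductivelylemma} directly to the given sequences $(\mu_n)$ and $(\nu_n)$. The $\mu_n$'s are nonnegative monotone additive on $\tau(S)$ with $\mu_n(S)=1$ as required, and each $\nu_n$, being monotone additive on $\tau(S)$, is in particular nonnegative additive on $\mathcal{U}(S)\subset\tau(S)$; hypothesis (ii) on pairwise $\mathcal{U}(S)$-separation is ours verbatim. This yields an infinite $\Lambda_1\subset\mathbb{N}$ such that for every $n\in\mathbb{N}$ and every $\varepsilon>0$ there is a clopen set $P$ which $\varepsilon$-separates every $\mu_m$ with $m\in\Lambda_1$ from $\nu_n$. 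Thus the $\mu$-side is now controlled uniformly, while the $\nu$-index is still handled one $n$ at a time.

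Next, enumerate $\Lambda_1=\{i_1<i_2<\cdots\}$ and define the reindexed sequences $\tilde{\mu}_k:=\mu_{i_k}$ and $\tilde{\nu}_k:=\nu_{i_k}$. Then $\tilde{\mu}_k$ is nonnegative additive on $\mathcal{U}(S)$, while $\tilde{\nu}_k$ is nonnegative monotone additive on $\tau(S)$ with $\tilde{\nu}_k(S)=1$, matching the setup of Lemma~\ref{Keylemma}. The clopen set $P$ produced by the previous step (applied to the index $i_k$) satisfies $\tilde{\mu}_m(P)<\varepsilon$ for \emph{every} $m\in\mathbb{N}$ and $\tilde{\nu}_k(S\setminus P)<\varepsilon$, so hypothesis (ii) of the Key lemma holds for the reindexed pair. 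Applying Lemma~\ref{Keylemma} delivers an infinite $\Lambda_2\subset\mathbb{N}$ with the property that for each $\varepsilon>0$ there is a clopen $Q$ which $\varepsilon$-separates $\tilde{\mu}_m$ from $\tilde{\nu}_n$ for all $m\in\mathbb{N}$ and all $n\in\Lambda_2$.

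Finally, set $\Lambda:=\{i_n:n\in\Lambda_2\}$, which is an infinite subset of $\Lambda_1$. Unwinding the reindexing, the same $Q$ satisfies $\mu_m(Q)<\varepsilon$ for every $m\in\Lambda$ (since $\Lambda\subset\Lambda_1$) and $\nu_n(S\setminus Q)<\varepsilon$ for every $n\in\Lambda$ (since $n=i_k$ for some $k\in\Lambda_2$), which is exactly the asserted uniform $\mathcal{U}(S)$-separation. The only mildly delicate point — and the one place the proof could go wrong if done casually — is precisely this double reindexing: had one applied the Key lemma to $(\mu_{i_k})_k$ paired with $\nu_n$ indexed by the original $\mathbb{N}$, the resulting $\Lambda_2$ would sit in $\mathbb{N}$ rather than in $\Lambda_1$, and intersecting the two index sets need not leave anything infinite. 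Using $\tilde\nu_k=\nu_{i_k}$ sidesteps this and the rest is bookkeeping.
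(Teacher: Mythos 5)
Your proof is correct and follows exactly the paper's route: apply the Inductive lemma to get $\Lambda_1$, reindex both sequences along $\Lambda_1$ so that the Key lemma's hypothesis (ii) holds, apply the Key lemma to get $\Lambda_2$, and take $\Lambda=\{i_n:n\in\Lambda_2\}$. Your closing remark about why both sequences must be reindexed (so that $\Lambda_2$ pulls back into $\Lambda_1$) is a point the paper leaves implicit, but the argument is the same.
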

\begin{proof}
According to Lemma~\ref{Inductivelylemma} there is an infinite set $\Lambda_1\subset\mathbb{N}$ such that for each $n\in\mathbb{N}$ and each $\varepsilon>0$ there is a clopen set $P\subset S$ which $\varepsilon$-separates $\mu_m$ and $\nu_n$ for all $m\in\Lambda_1$. Thus, the condition $(ii)$ in Lemma~\ref{Keylemma} is satisfied for sequences $\mu_{i_n}$ and $\nu_{i_n}$, $n\in\mathbb{N}$, where $\Lambda_1=\{i_1<i_2<\ldots\}$. According to Lemma~\ref{Keylemma} there exists an infinite set $\Lambda_2\subset\mathbb{N}$ such that for each $\varepsilon>0$ there is a set $Q\in\mathcal{U}(S)$ $\varepsilon$-separating $\mu_{i_m}$ and $\nu_{i_n}$ for all $m\in\mathbb{N}$ and $n\in\Lambda_2$. Set $\Lambda:=\{i_n:\,n\in\Lambda_2\}$.
\end{proof}

\begin{proof} [Proof of Theorem~\ref{Our_aim}] To prove that $C(S)$ is 1-Grothendieck it suffices the condition (iii) in Proposition~\ref{equivalence}. So let $\mu_n$ and $\nu_n$, $n\in\mathbb{N}$, be two sequences of Radon probability measures on $S$ such that $\mu_m$ and $\nu_n$ are mutually singular for each $m,\,n\in\mathbb{N}$.\par Now fix $m,\,n\in\mathbb{N}$ and $\varepsilon>0$. From mutually singularity and regularity of $\mu_m$ and $\nu_n$ and also from the fact that they are probabilities there exist two disjoint closed sets $K,\,L\subset S$ such that $\mu_m(K)>1-\varepsilon$ and $\nu_n(L)>1-\varepsilon$. According to Lemma~\ref{oddeleni_obojetnou_mnozinou_v_totalne_nesouvislem} there is a clopen set $M\subset S$ such that $L\subset M$ and $K\subset S\setminus M$. Thus, $\mu_m(M)<\varepsilon$ and $\nu_n(M)>1-\varepsilon$, that is $\nu_n(S\setminus M)<\varepsilon$. We have just shown that $\mu_m$ and $\nu_n$ are $\mathcal{U}(S)$-separated for each $m,\,n\in\mathbb{N}$. \par
The conditions of Proposition~\ref{Substance} are satisfied. There is the condition (iii) we have checked above and the others are clear. Proposition~\ref{Substance} says that there is an infinite set $\Lambda\subset\mathbb{N}$ such that for each $\varepsilon>0$ there exists a clopen set $A\subset S$ which $\varepsilon$-separates $\mu_m$ and $\nu_n$ for all $m,\,n\in\Lambda$.
\end{proof}

We still do not know if the other Grothendieck spaces mentioned in the section \textit{Introduction and main results} are 1-Grothendieck as well. 

%\begin{proposition} Let $K$ be a compact space. Then the following assertions are equivalent.
%\begin{description}
%\item[(1)]  $\mathrm{(I)}\mathrm{-env}\left(B_{C(K;\mathbb{R})}\right)=B_{C(K;\mathbb{R})^{\ast\ast}}$,
%\item[(2)]  $\mathrm{(I)}\mathrm{-env}\left(B_{C(K;\mathbb{C})}\right)=B_{C(K;\mathbb{C})^{\ast\ast}}$.
%\end{description}
%\end{proposition}

\end{document}